\theoremstyle{plain}
\newtheorem{theorem}{Theorem}
\newtheorem{corollary}{Corollary}
\newtheorem{proposition}{Proposition}
\theoremstyle{definition}
\newtheorem*{definition}{Definition}
\newtheorem*{remark}{Remark}
\newcommand{\D}{\ensuremath{\mathbf{D}}}
\title[Shifting CEI processes at random]{Shifting processes with cyclically exchangeable increments at random}
\author{Lo\"\i c Chaumont}
\address{LAREMA, D\'epartement de Math\'ematiques, Universit\'e d'Angers,
 2, Bd Lavoisier-49045, Angers Cedex 01. France. 
}
\email{loic.chaumont@univ-angers.fr}
\author{Ger\'onimo Uribe Bravo}
\address{Instituto de  Matem\'aticas, Universidad Nacional Aut\'onoma de M\'exico, \'Area de la Investigaci\'on Cient\'ifica, Ciudad Universitaria 04510, Ciudad de M\'exico, M\'exico}
\email{geronimo@matem.unam.mx}
\subjclass[2000]{
60G09, 
60F17, 
60G17, 
60J65
}
\keywords{Cyclic exchangeability, Vervaat transformation,  Brownian bridge, three dimensional Bessel bridge, uniform law, path transformation, occupation time.}
\newcommand{\eps}{\ensuremath{ \varepsilon}}
\newenvironment{esn}{\begin{equation*}}{\end{equation*}}
\newcommand{\imf}[2]{\ensuremath{#1\!\paren{#2}}}
\newcommand{\paren}[1]{\ensuremath{\left( #1\right) }}
\newcommand{\fun}[3]{\ensuremath{#1:#2\to #3}}
\newcommand{\re}{\ensuremath{\mathbb{R}}}
\newcommand{\set}[1]{\ensuremath{\left\{ #1\right\} }}
\newcommand{\indi}[1]{\si_{#1}}
\newcommand{\si}{{\ensuremath{\bf{1}}}}
\newcommand{\cadlag}{c\`adl\`ag}
\newcommand{\sa}{\ensuremath{\sigma}\nbd field}
\newcommand{\nbd}{\nobreakdash -}
\newcommand{\sag}[1]{\sigma\!\paren{#1}}
\newcommand{\floor}[1]{\ensuremath{\lfloor #1\rfloor}}
\newcommand{\defin}[1]{{\bf #1}}
\newcommand{\p}{\ensuremath{ \sip  } }
\newcommand{\sip}{\bb{P}}
\newcommand{\bb}[1]{\mathbb{#1}}
\newcommand{\proba}[1]{\ensuremath{\sip\! \left( #1 \right)}}
\newcommand{\esp}[1]{\ensuremath{\se\! \left( #1 \right)}}
\newcommand{\se}{\ensuremath{\bb{E}}}
\newcommand{\sko}{D}
\newcommand{\abs}[1]{\hspace{.25mm}\left|#1\right|\hspace{.25mm}}
\DeclareMathOperator{\id}{Id} %
\newcommand{\bra}[1]{\ensuremath{\left[ #1\right] }}
\newcommand{\oo}{\ensuremath{ \Omega  } }
\newcommand{\espc}[2]{\ensuremath{\imf{\se}{\cond{#1}{#2}}}}
\newcommand{\cond}[2]{\left.\vphantom{#2}#1\ \right| #2}
\newcommand{\F}{\ssa}
\newcommand{\ssa}{\ensuremath{\mathscr{F}}}
\DeclareMathOperator{\sgn}{sgn} %
\newcommand{\fp}[1]{\ensuremath{\left\{ #1 \right\}}}
\newcommand{\ed}{\stackrel{(d)}{=}}
\begin{document}
\begin{abstract}
We propose a path transformation which applied to a 
cyclically exchangeable increment process conditions its minimum to belong to
a given interval. 

This path transformation is then applied to processes with
start and end at $0$. It is seen that, under simple conditions, 
the weak limit as $\eps\to 0$ of the process conditioned on remaining above
$-\eps$ exists  and has the law of the Vervaat transformation of the
process. 

We examine the consequences of this path transformation on processes with exchangeable increments, L\'evy bridges, and the Brownian bridge. 
\end{abstract}
\maketitle

\section{Motivation: Weak convergence of conditioned Brownian bridge and the Vervaat transformation}
\label{motivationSection}
Excursion theory for Markov processes has proved to be an useful tool since its inception in \cite{MR0402949} (although some ideas date back to \cite{MR0000919}). 
This is true both in theoretical and applied investigations (see for example \cite{MAFI:MAFI349}, \cite{multiplicativeCoalescentAldous},  \cite{MR2295611}, \cite{MR2603058}, \cite{MR2603061}, \cite{MR2603062}, \cite{MR3134857}). 
Especially fruitful has been the application of excursion theory in the case of Brownian motion and other L\'evy processes, where it lies at the foundation of the so called fluctuation theory aimed at studying their extremes (cf. \cite{MR1406564}, \cite{MR2250061}, \cite{MR2320889}). 

Brownian motion is undoubtedly one of the most tractable L\'evy processes. 
It is therefore not a surprise that excursion theory takes a very explicit form for this process. 
In particular, we have the following interpretation of the normalized excursions above $0$ of Brownian motion as Brownian motion conditioned to start at $0$, end at $0$ at a given time $t$ (here we consider $t=1$), and remain positive throughout $(0,t)$. 
We recall that the Brownian bridge is a version of Brownian motion conditioned to start at $0$ and end at $0$ at time $1$. 
\begin{theorem}[\cite{MR0436353} and \cite{MR515820}]
\label{DIMVTheorem}
Let $X$ be a Brownian bridge from $0$ to $0$ of length $1$. 
Then, the law of $X$ conditioned to remain above $-\eps$ converges weakly as $\eps\to 0$ toward law of the normalized Brownian excursion.  
Furthermore, the weak limit can also be constructed as follows: if $\rho$ is the unique instant at which $X$ attains its minimum, then the weak limit  has the same law as
\begin{esn}
\imf{\theta_\rho}{ X}_t= 
\begin{cases}
m+X_{\rho+t}& 0\leq t\leq 1-\rho\\
m+X_{t-(1-\rho)}&1-\rho\leq t\leq 1
\end{cases}.
\end{esn}
\end{theorem}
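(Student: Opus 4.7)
The plan is to prove both halves of the theorem by exploiting the cyclic exchangeability of the Brownian bridge, namely $\imf{\theta_u}{X}\ed X$ for every deterministic $u\in[0,1]$, together with the well-known absolute continuity relation between the Brownian bridge and the three-dimensional Bessel bridge (which serves as the concrete candidate for the normalized excursion law).

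I would first tackle the Vervaat identification, since it provides an explicit candidate limit. Brownian bridge paths almost surely possess a unique argmin $\rho$, so the cyclic shift $\imf{\theta_\rho}{X}$ is well defined and continuous in $X$ outside a negligible set. Cyclic exchangeability forces $\rho$ to be uniform on $[0,1]$, and the same invariance, tested against an auxiliary uniform random shift independent of $X$, shows that $\rho$ is independent of $\imf{\theta_\rho}{X}$. To finish, one computes the finite-dimensional distributions of $\imf{\theta_\rho}{X}$: using the bridge's transition density together with a disintegration over the joint law of the minimum and its location, the marginals at $0<t_1<\cdots<t_n<1$ match those of the three-dimensional Bessel bridge, identifying $\imf{\theta_\rho}{X}$ as the normalized Brownian excursion.

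For the weak convergence, the cleanest route is through the first-passage time $T_\eps=\inf\set{t\geq 0: X_t=-\eps}$, so that $\set{\inf X\geq -\eps}=\set{T_\eps>1}$. For any bounded continuous functional $F$ one writes
\begin{esn}
\espc{\imf{F}{X}}{\inf X\geq -\eps}=\frac{\esp{\imf{F}{X}\,\si_{\set{T_\eps>1}}}}{\proba{T_\eps>1}}
\end{esn}
and expands numerator and denominator using the Brownian semigroup together with the known density of the joint law of the minimum and its location under the bridge. Letting $\eps\to 0$ after isolating the common vanishing factor gives a ratio matching the expectation of $F$ against the Bessel-bridge law, and thus against the law of $\imf{\theta_\rho}{X}$ obtained above.

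The main obstacle is precisely this passage to a $0/0$ limit: one must identify the rate at which $\proba{\inf X\geq -\eps}\to 0$ and match it, uniformly in $F$, with the corresponding decay of the numerator. Two equivalent devices ease this step. One can construct an explicit coupling in which, on $\set{\inf X\geq -\eps}$, $X$ is uniformly close to $\imf{\theta_{\rho_\eps}}{X}$ for a perturbation $\rho_\eps\to\rho$ a.s., and then invoke continuity of the Vervaat shift at paths with unique argmin. Alternatively, one carries out a Doob $h$-transform of Brownian motion killed at $-\eps$ with the harmonic function $x\mapsto x+\eps$, reducing the claim to standard weak convergence of conditioned Brownian motion to the Bessel process.
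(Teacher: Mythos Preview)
Your proposal follows the classical route of Durrett--Iglehart and Vervaat that the paper cites as its starting point, and with care it can be made to work; but it is genuinely different from the paper's own proof. The paper does not compute any transition densities, does not handle a $0/0$ limit, and does not invoke the Bessel bridge at all. Instead, it constructs a random time $\eta=\eta(\eps)$, namely a uniform point on $\{t:X_t-\underline X<\eps\}$ picked via an independent uniform $U$, and applies its main result (Theorem~\ref{mainTheorem} with $I=(-\eps,0]$) to conclude that $\theta_\eta X$ has \emph{exactly} the law of $X$ conditioned on $\underline X\ge-\eps$. Uniqueness of the argmin forces $\eta\to\rho$ almost surely as $\eps\to0$, and continuity of $u\mapsto\theta_u f$ (thinking of $f$ as a function on the circle) gives $\theta_\eta X\to\theta_\rho X$ almost surely. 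Weak convergence then drops out immediately, with no tightness argument needed.

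What each approach buys: your route identifies the limit explicitly as the three-dimensional Bessel bridge, at the cost of the analytic work (semigroup expansions, matching vanishing rates, or an $h$-transform) that the paper is designed to avoid; note also that your semigroup expansion as written only yields finite-dimensional convergence, so tightness still has to come from one of the two devices you mention at the end. The paper's route gives weak convergence to $\theta_\rho X$ in a couple of lines once Theorem~\ref{mainTheorem} is available, but does not separately re-establish that $\theta_\rho X$ is the normalized excursion; that identification is left to the cited references. Your final ``coupling'' remark is in spirit close to what the paper does, but the paper makes it completely concrete: the coupling \emph{is} $\theta_\eta X$, and the almost sure convergence $\eta\to\rho$ is what replaces every limiting computation.
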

The first assertion was proved in \cite{MR0436353} by showing the convergence of the finite-dimensional distributions and then by establishing tightness through explicit computations for Brownian motion. 
The weak convergence of the finite-dimensional distributions (fdd) is a simple consequence of having explicit expressions for the transition densities for Brownian motion and Brownian motion killed upon reaching zero, 
as can be seen in Lemma 5.2 of \cite{MR0436353}. 
They are then seen to coincide with the fdd of the three-dimensional Bessel bridge from $0$ to $0$ of length $1$. 
Other approaches to Theorem \ref{DIMVTheorem} are found in \cite{MR704566} (in terms of computations with generators) and \cite{MR3160578} (for more general bridges of L\'evy processes). 
One consequence of this work is to show that tightness follows very easily from a path transformation which applied to a 
cyclically exchangeable increment process conditions its minimum to belong to
a given interval. 
The second assertion of Theorem \ref{DIMVTheorem} was shown in \cite{MR515820}; a second consequence of our work is to show that this is a much more general fact. 
We now illustrate the technique in the case of Theorem \ref{DIMVTheorem}. 

The fractional part 
of a real number $t$ will be denoted $\fp{t}$ 
. 
We define the family of transformations $\paren{\theta_u, u\in[0,1]}$, acting on any continuous path $\fun{f}{[0,1]}{\re}$ with $\imf{f}{0}=\imf{f}{1}=0$  by means of
\begin{align}
\label{def3}
\imf{\theta_u f}{t}
&=\imf{f}{\fp{t+u}}-\imf{f}{u}
.
\end{align}

The transformation $\theta_u$ consists in
inverting the paths $\{f(t),\,0\leq t\leq u\}$ and
$\{f(t),\,u\leq t\leq1\}$
. 
See Figure \ref{shiftFigure} for an example.  
\begin{figure}
\includegraphics[width=\textwidth]{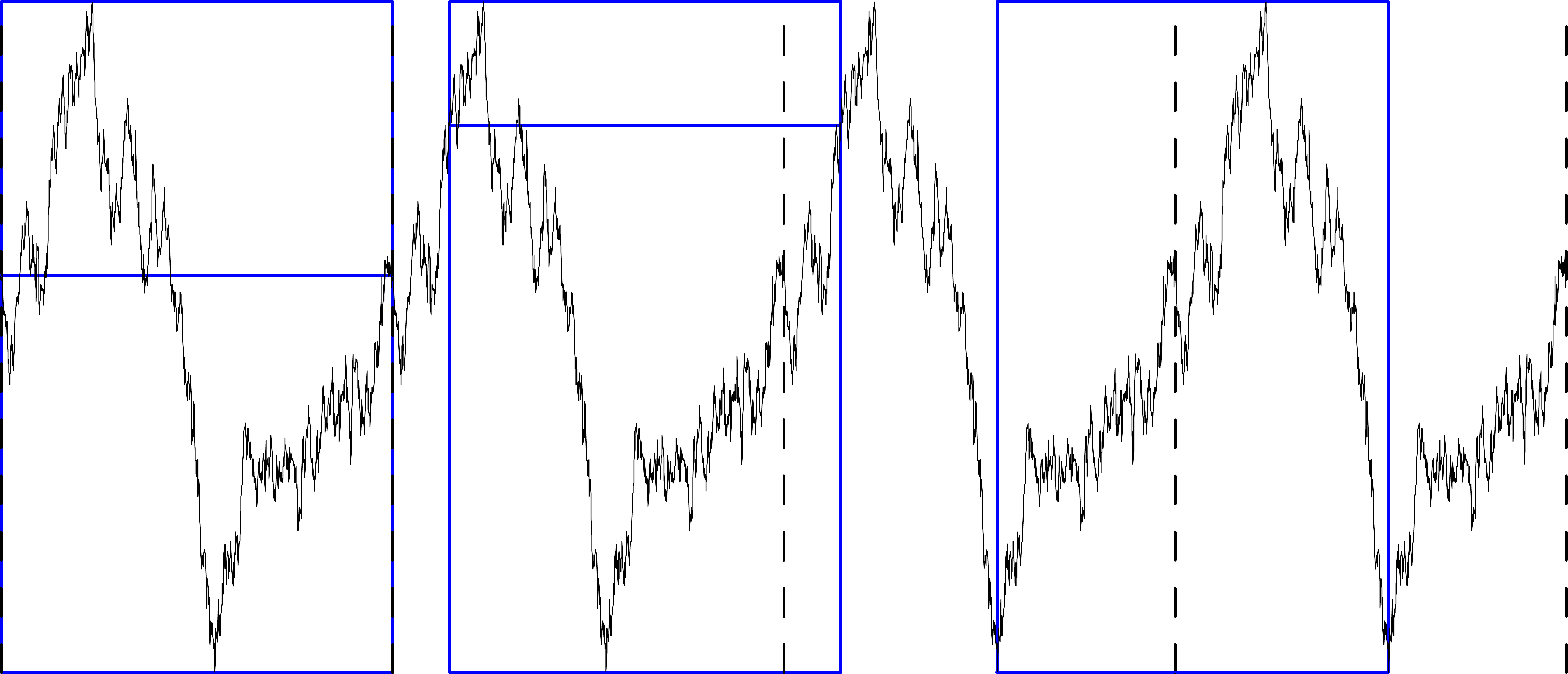}
\caption{Repeated trajectory of a Brownian bridge. The first frame shows the original trajectory. The second shows its shift at $u=.14634$. The third frame shows the shift at the location of the unique minimum, illustrating the Vervaat transformation.}
\label{shiftFigure}
\end{figure}

\begin{proof}[Proof of Theorem \ref{DIMVTheorem}]
Recall that the minimum $\underline X$ of a Brownian bridge is achieved at a unique place, say $\rho$, 
as can be proved simply using the Gaussian character of $X$. 
Note that the minimum is transformed as follows under a shift: $\underline X\circ\theta_t= \underline X-X_t$. 
We now introduce a random distribution which will let us choose a uniform point on the nonempty open set $\set{t\in [0,1]: X_t-\underline X<\eps}$: 
\begin{esn}
F^\eps_t=\frac{\int_0^t  \indi{X_s-\underline X<\eps}\, ds}{\int_0^1 \indi{X_s-\underline X<\eps}\, ds}. 
\end{esn}Note that $F^\eps_0=0$, $F^\eps_1=1$ and that $F^\eps$ is non-decreasing and continuous. 
Let $U$ be a uniform random variable independent of $X$, and set\begin{esn}
\tau_t=\inf\set{s\geq 0: F^\eps_s>t}\quad \text{and}\quad \eta=\tau_{U}. 
\end{esn}
We now perform the random shift $X\circ \theta_\eta$;
the forthcoming Theorem \ref{mainTheorem} (applied with $I=(-\eps,0]$) tells us that this process has the law of $X$ conditioned on $\underline X\geq -\eps$. 
Since $\rho$ is the unique minimim, it follows that\begin{esn}
\set{\rho}=\bigcap_{\eps>0}\set{t\in [0,1]: X_t-\underline X<\eps}, 
\end{esn}so that $\eta\to \rho$ as $\eps\to 0$. 

Now, note that for any continuous $\fun{f}{[0,1]}{\re}$ such that $\imf{f}{0}=\imf{f}{1}$, if $v\to u$ then $\| \theta_{v}f- \theta_u f\|\to 0$. 
Indeed, such an $f$ might be thought of as continuous on the unit circle, and the transformation $\theta_u$ is obtained by applying $f$ to the rotation of $t$ by $u$, which is a continuous operation.
We conclude that $\theta_{\eta} X\to \theta_\rho X$ almost surely and hence that the law of $X$ conditioned on $\underline X\geq -\eps$ converges weakly to the law of $\theta_\rho X$.
\end{proof}

\section{Conditioning the minimum of a process with cyclically exchangeable increments}
\label{MainTheoremSection}
We now turn to our main theorem in the context of cyclically exchangeable increment processes. 

We use the canonical setup: let $\D$ stand for the Skorohod space of \cadlag\ functions $\fun{f}{[0,1]}{\re}$ on which the canonical process $X=\paren{X_t,t\in [0,1]}$ is defined. 
Recall that $\fun{X_t}{\D}{\re}$ is given by\begin{esn}
\imf{X_t}{f}=\imf{f}{t}.
\end{esn}Then, $\D$ is equipped with the \sa\ $\sag{X_t,t\in [0,1]}$. 
We extend the transformation $\theta_u$ defined in \eqref{def3} by setting\begin{equation}
\label{generalShift}
\imf{\theta_u f}{t}=\imf{f}{\fp{t+u}}-\imf{f}{u}+\imf{f}{\floor{t+u}}. 
\end{equation}
The transformation $\theta_u$ consists in
inverting the paths $\{f(t),\,0\leq t\leq u\}$ and
$\{f(t),\,u\leq t\leq1\}$
 in such a way that the new path
$\theta_u(f)$ has the same values as $f$ at
times 0 and 1, i.e. $\theta_u f(0)=f(0)$
and $\theta_uf(1)=f(1)$. 
We call $\theta_u$ the \defin{shift} at time $u$ of $X$ over the interval
[0,1].\\
Note that we will always use the transformation $\theta_u$ with $\imf{f}{0}=0$. 

\begin{definition}[CEI process]
A \cadlag\ stochastic process has \defin{cyclically exchangeable increments (CEI)} if its law satisfies the following identities in law: 
\begin{esn}
\theta_u X\ed X \text{  for every $u\in [0,1]$. }
\end{esn}
\end{definition}

The overall minimum $\underline X$, which can be defined now as a functional on Skorohod space, is given by\begin{esn}
\underline X=\inf_{0\leq t\leq 1} X_t.
\end{esn}

Intuitively, to condition $X$ on having a minimum on a given interval $I\subset (-\infty,0]$, we choose $t$ uniformly on the set in which $\underline X\circ\theta_t \in I$ by using the occupation times process\begin{esn}
A_t^{I}=\int_0^t \indi{\underline X\circ\theta_s \in I}\, ds.
\end{esn}

Here is the main result. 
It provides a way to construct CEI processes conditioned on their overall minimum.
\begin{theorem}
\label{mainTheorem}
Let $(X,\p)$ be any non trivial CEI process  such that $X_0=0$, $X_1\ge0$ and $\proba{\underline X\in I}>0$. 
Let $U$ be an independent random time which is uniformly distributed  over $[0,1]$ and define:
\begin{equation}
\label{randomShiftForMainTheorem}
\nu=\inf\{t:A_t^{I}=UA_1^{I}\}.
\end{equation}
Conditionally on $A_1^{I}>0$, the process $\theta_\nu(X)$ is independent of
$\nu$ and has the same law as $X$ conditionally on 
$\underline{X}\in I$.
Moreover the time $\nu$ is uniformly distributed over $[0,1]$.

Conversely, if $Y$ has the law of $X$ conditioned on $\underline X\in I$ and $U$ is uniform and independent of $Y$ then $\imf{\theta_U}{Y}$ has the same law as $X$ conditioned on $A^I_1>0$. 
\end{theorem}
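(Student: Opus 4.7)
The plan is to verify, for bounded measurable $F,G$, the identity
\begin{esn}
\esp{F(\theta_\nu X) G(\nu) \si_{A^I_1 > 0}} = \proba{A^I_1 > 0} \cdot \esp{F(X) \mid \underline X \in I} \cdot \int_0^1 G(t)\,dt,
\end{esn}
from which all three forward assertions follow immediately. A preliminary lemma is the shift-invariance $A^I_1 \circ \theta_t = A^I_1$ for every $t \in [0,1]$, which I would derive from the composition rule $\theta_s \circ \theta_t = \theta_{\fp{s+t}}$ (verified directly from \eqref{generalShift}) together with the change of variable $r = \fp{s+t}$ in the integral defining $A^I_1$. This makes both $A^I_1$ itself and the event $\{A^I_1 > 0\}$ cyclic invariants, which is essential for combining CEI with the weight $1/A^I_1$.

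For the main computation, write $\nu = \tau_U$ with $\tau_u = \inf\{t : A^I_t = u A^I_1\}$. Conditioning on $X$ on $\{A^I_1 > 0\}$, the pushforward of Lebesgue on $[0,1]$ under $\tau$ has density $\si_{\underline X \circ \theta_t \in I}/A^I_1$ with respect to Lebesgue, so the change of variable $t = \tau_u$ yields
\begin{esn}
\int_0^1 F(\theta_{\tau_u} X) G(\tau_u)\,du = \frac{1}{A^I_1}\int_0^1 F(\theta_t X) G(t) \si_{\underline X \circ \theta_t \in I}\,dt.
\end{esn}
Taking expectation, applying Fubini, and then using CEI together with the shift-invariance of $A^I_1$ (which lets me replace the functional $F(\theta_t X) \si_{\underline X \circ \theta_t \in I} \si_{A^I_1 > 0}/A^I_1$ under $\p$ by its value at $\theta_0 X = X$) produces
\begin{esn}
\esp{F(\theta_\nu X) G(\nu) \si_{A^I_1 > 0}} = \paren{\int_0^1 G(t)\,dt} \cdot \esp{F(X) \si_{\underline X \in I,\, A^I_1 > 0}/A^I_1}.
\end{esn}
The factorization in $G$ already gives independence of $\theta_\nu X$ and $\nu$ conditional on $\{A^I_1 > 0\}$, together with the uniformity of $\nu$.

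The converse is obtained by a symmetric computation: for $Y$ with the law of $X$ conditional on $\underline X \in I$ and $U$ independent uniform, a Fubini rearrangement followed by CEI (substituting $\theta_u X \leftrightarrow X$) yields $\esp{F(\theta_U Y)} = \esp{F(X) A^I_1}/\proba{\underline X \in I}$, namely the law of $X$ size-biased by $A^I_1$. The step I expect to be the main obstacle, in both directions, is matching these weighted measures (the $1/A^I_1$-weighted measure appearing in the forward direction, and the $A^I_1$-size-biased measure appearing in the converse) with the targeted conditional laws $\p(\cdot\mid\underline X\in I)$ and $\p(\cdot\mid A^I_1>0)$. My plan for this identification is to combine the companion identity $\esp{A^I_1} = \proba{\underline X \in I}$ (a direct Fubini consequence of the CEI marginal $\underline X \circ \theta_s \ed \underline X$) with the normalization $\esp{\si_{\underline X \in I,\, A^I_1 > 0}/A^I_1} = \proba{A^I_1 > 0}$ read off from the $F \equiv 1$ case above; together these two balance the densities on the cyclically invariant events and force the measures to coincide. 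The hypotheses $X_0 = 0$ and $X_1 \geq 0$ enter here in a structural role: they guarantee that each $\theta_t$ sends paths to paths of the same class, so that CEI and the minimum transformation $\underline{\theta_t X}$ interact cleanly throughout the argument.
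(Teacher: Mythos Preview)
Your change of variables and CEI step are correct and yield
\[
\esp{F(\theta_\nu X)\,G(\nu)\,\si_{A^I_1>0}}
=\paren{\int_0^1 G(t)\,dt}\,\esp{\frac{F(X)\,\si_{\underline X\in I}}{A^I_1}},
\]
but the identification you outline afterward does not go through. The two scalar identities $\esp{A^I_1}=\proba{\underline X\in I}$ and $\esp{\si_{\underline X\in I}/A^I_1}=\proba{A^I_1>0}$ only pin down normalizing constants; they cannot force the measure $A_1^{I\,-1}\,\si_{\underline X\in I}\,d\p$ to coincide with $\si_{\underline X\in I}\,d\p$. Their Radon--Nikodym ratio is $1/A^I_1$, a cyclic invariant that is genuinely non-constant across orbits, so the two measures differ by a size-biasing that no amount of ``balancing densities on cyclically invariant events'' will remove. (In the orbit picture: your transform selects the orbit of $X$ with weight proportional to $\si_{A^I_1>0}$, whereas $\p(\cdot\mid\underline X\in I)$ selects it with weight proportional to $A^I_1$.) The same objection applies verbatim to your converse computation, where you arrive at the $A^I_1$-size-biased law and then hope to match it with $\p(\cdot\mid A^I_1>0)$.

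The paper does not compute the law of $\theta_\nu X$ head-on. It starts instead from $\theta_U X$ with $U$ uniform and conditions on the event $\{\underline X\circ\theta_U\in I\}$: a single application of CEI gives immediately that $(\theta_U X,U)$ under this conditioning has law $\p(\cdot\mid\underline X\in I)\otimes\mathrm{Unif}$, with no stray weight. The link to $\nu$ is then made through the time-change identity $U=\inf\{t:A^I_t=(A^I_U/A^I_1)\,A^I_1\}$ together with the fact (proved by a second change of variables) that, under the same conditioning, $A^I_U/A^I_1$ is uniform on $[0,1]$ and independent of $X$. In this order the $A^I_1$ factor is absorbed into the time change rather than left over as a density to be argued away at the end; that absorption is exactly the mechanism your plan lacks.
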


\begin{remark}
When $X_1=0$, the set $\set{A^I_1>0}$ can be written in terms of the amplitude $A=\overline X-\underline X$ (where $\overline X=\sup_{t\in [0,1]} X_t$) as $\set{A\geq -\inf I}$. 
\end{remark}

\begin{proof}[Proof of Theorem \ref{mainTheorem}]
We first note that the law of $X\circ \theta_U$  conditionally on $\underline X\circ\theta_U\in I$ is equal to the law of $X$ conditionally on $\underline X\in I$. Indeed, using the CEI property:
\begin{align*}
\esp{ \imf{f}{U} F\circ\theta_U \indi{\underline X\circ\theta_U\in I}}
&=\int_0^1\imf{f}{u}\esp{F\circ\theta_u\indi{\underline X\circ\theta_u\in I}} \, du
\\&=\esp{F\indi{\underline X\in I}}\int_0^1\imf{f}{u} \, du.
\end{align*}Additionally, we conclude that the random variable $U$ is uniform on $(0,1)$ and independent of $X\circ\theta_U$ conditionally on $\underline X\circ \theta_U\in I$.

%

Write $U$ in the following way:
\begin{equation}
\label{546}
U=\inf\left\{t:A^{I}_t=\frac{A_U^{I}}{A_1^{I}}A_1^{I}\right\}\,.
\end{equation}
Then it suffices to prove that conditionally on $\underline X\circ\theta_U\in I$, 
the random variable $A_U^{I}/A_1^{I}$ is uniformly distributed over  [0,1] and independent of $X$.
Indeed from the conditional independence and (\ref{546}), we deduce that conditionally on $\underline X\circ\theta_U\in I$, the law of $(\theta_U(X),U)$ is the same as that of $(\imf{\theta_\nu}{X},\nu)$. 

Let $F$ be any positive, measurable functional defined on $\sko$ and $f$ be any positive Borel function. From
the change of variable $s={A_t^{I}}/{A_1^{I}}$, we obtain
\begin{align*}
&\esp{F(X) f({A_U^{I}}/{A_1^{I}})\imf{\indi{I}}{\underline X\circ\theta_U}}
\\
&=\esp{\int_0^1f({A_t^{I}}/{A_1^{I}})F(X)\imf{\indi{I}}{\underline X\circ \theta_t}\,dt}\\
&=\esp{\int_0^1f({A_t^{I}}/{A_1^{I}})F(X)\,dA_t^{I}}\\
&=\esp{F(X)A_1^{I} }\int_0^1f(t)\,dt\,,
\end{align*}
which proves the conditional independence mentioned above.

The converse assertion is immediate using the independence of $\theta_\nu X$ and $\nu$ and the fact that the latter is uniform. 
\end{proof}

We will now apply Theorem \ref{MainTheoremSection} to particular situations to get diverse generalizations of Theorem \ref{DIMVTheorem}.

\section{Exchangeable increment processes and the Vervaat transformation}
\label{EIProcessSection}
In Section \ref{MainTheoremSection} we shifted paths at random using $\theta_\eta$ to condition a given CEI process to have a minimum in a given interval $I$. 
When $I=(-\eps,0]$ and $X_1=0$, and under a simple technical condition, we now see that the limiting transformation of $\theta_\eta$ as $\eps\to 0$ is the Vervaat transformation. 
Hence, we obtain an extension of Theorem \ref{DIMVTheorem}. 

\begin{corollary}
\label{VervaatCorollary}
Let $(X,\p)$ be any non trivial CEI process  such that $X_0=0=X_1$. 
Assume that there exists a unique $\rho\in (0,1)$ such that $X_\rho=\underline X$ and that $X_{\rho-}=X_\rho$. 
Then, the law of $X$ conditioned to remain above $-\eps$ converges weakly in the Skorohod $J_1$ topology as $\eps\to 0$. 
Furthermore, the weak limit is the law of $\theta_\rho X$. 
\end{corollary}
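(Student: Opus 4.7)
The plan is to mirror the argument given in Section \ref{motivationSection} for the Brownian bridge, using Theorem \ref{mainTheorem} in place of the ad hoc Gaussian computation. Concretely, I would reduce the weak convergence to an almost sure convergence of shifted trajectories, by realising the conditioned process as a random shift of $X$.

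First I would apply Theorem \ref{mainTheorem} with $I=(-\eps,0]$. Since $X_0=X_1=0$, a direct calculation from \eqref{generalShift} shows $\underline{X}\circ\theta_t = \underline{X}-X_t$, so
\begin{esn}
\set{t\in[0,1]:\underline X\circ\theta_t\in(-\eps,0]}=\set{t\in[0,1]:X_t-\underline X<\eps},
\end{esn}
and ``$X$ conditioned to remain above $-\eps$'' is the same as ``$X$ conditioned on $\underline X\in(-\eps,0]$'' because $\underline X\le 0$. Hence Theorem \ref{mainTheorem} yields a random time $\nu_\eps$, defined by \eqref{randomShiftForMainTheorem}, such that $\theta_{\nu_\eps}(X)$ has exactly the law of $X$ conditioned to remain above $-\eps$. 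Note that the event $\set{A_1^{(-\eps,0]}>0}$ has probability one for all sufficiently small $\eps$, since $X_{\rho-}=X_\rho$ forces the above set to contain a two-sided neighborhood of $\rho$.

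Next I would show $\nu_\eps\to\rho$ almost surely. Since $\nu_\eps$ belongs to $\set{t: X_t-\underline X<\eps}$, it suffices to show that this random set shrinks to $\set{\rho}$ as $\eps\downarrow 0$. By uniqueness of the minimum, $X_t>\underline X$ for $t\ne\rho$, while right-continuity of $X$ and the hypothesis $X_{\rho-}=X_\rho$ together rule out a sequence $t_n\to t^\ast\ne\rho$ with $X_{t_n}\to\underline X$ (such a sequence would force $X_{t^\ast}=\underline X$ or $X_{t^\ast-}=\underline X$ with $t^\ast\ne\rho$, contradicting uniqueness once one notes that an upward jump from $\underline X$ at $t^\ast$ would be inherited by $X_\rho$ via CEI considerations). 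A compactness argument then gives the set containment, hence $\nu_\eps\to\rho$.

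Finally I would upgrade this to $\theta_{\nu_\eps}X\to\theta_\rho X$ in the Skorohod $J_1$ topology, whence the desired weak convergence follows at once. The key lemma is that, for any $f\in\D$ with $f(0)=f(1)$, the map $u\mapsto\theta_u f$ is $J_1$-continuous at any continuity point $u=\rho$ of $f$. Intuitively one views $f$ as a function on the circle $\re/\bb{Z}$ and $\theta_u$ as rotation by $u$ followed by cutting the circle at $0$: if $f$ is continuous at $\rho$, perturbing the cut point slightly only perturbs the unrolled path by a reparametrization of $[0,1]$. The main obstacle is making this precise for càdlàg trajectories: one must exhibit an explicit time change $\lambda_\eps:[0,1]\to[0,1]$, piecewise linear with $\lambda_\eps(1-\nu_\eps)=1-\rho$, such that both $\|\lambda_\eps-\id\|_\infty\to 0$ and $\|\theta_{\nu_\eps}X\circ\lambda_\eps-\theta_\rho X\|_\infty\to 0$ almost surely; the second estimate uses precisely the continuity of $X$ at $\rho$ (which controls the discrepancy $X_{\nu_\eps}-X_\rho$ that appears as the vertical shift in $\theta_u X$). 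Once this $J_1$-continuity is established, the a.s.\ convergence $\theta_{\nu_\eps}X\to\theta_\rho X$ completes the proof.
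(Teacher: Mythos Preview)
Your approach is essentially the paper's: realise the conditioned law as $\theta_{\nu_\eps}X$ via Theorem \ref{mainTheorem}, show $\nu_\eps\to\rho$ from uniqueness of the minimum, and then establish $J_1$-continuity of $u\mapsto\theta_u X$ at $u=\rho$. The paper carries out the last step more concretely than your sketch: given $\gamma>0$ it uses continuity of $X$ at $\rho$ to find $\delta$ with $|X_s-X_\rho|<\gamma$ on $[\rho-\delta,\rho+\delta]$, then uses the \cadlag\ property to partition $[0,\rho-\delta]$ and $[\rho+\delta,1]$ into subintervals of oscillation $<\gamma$, and builds $\lambda$ by \emph{shifting} those partition points by $\rho-\nu_\eps$ (with scaling only on the two extreme subintervals). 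Your single piecewise-linear break at $1-\nu_\eps$ is not quite enough for \cadlag\ paths; the shift-plus-endpoint-scaling construction is what actually controls the sup norm.

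One step in your write-up is not valid: the parenthetical that an upward jump from $\underline X$ at some $t^\ast\ne\rho$ ``would be inherited by $X_\rho$ via CEI considerations'' does not make sense. Cyclic exchangeability is a distributional property and tells you nothing about a fixed trajectory; you cannot transport a jump from one random location to another pathwise. The paper does not argue this point either---it simply asserts $\eta\to\rho$ from uniqueness of the minimum---so the hypothesis should be read as saying that $\rho$ is the unique time at which the infimum is attained or approached as a left limit (which is what is verified in all the applications). You should drop the CEI justification and either read the hypothesis this way or add it as an explicit assumption.
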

Note that we assume that the infimum of the process $X$ is achieved at $\rho$. 
Actually if the infimum is only achieved as a limit (from the left) at $\rho$ and $X_{\rho-}<X_\rho$ then the transformation $\theta_\eta$ converges, as $\eps\to 0$ pointwise to a process $\theta_\rho$ which satisfies $\imf{\theta_\rho}{0}=0$, $\imf{\theta_{\rho}}{0+}=X_{\rho}-X_{\rho-}$. 
Hence, convergence cannot take place on Skorohod space. 
A similar fact happens when $X_{\rho-}>X_\rho$. 
After the proof, we shall examine an example of applicability of Corollary \ref{VervaatCorollary} to exchangeable increment processes. 
\begin{proof}
We use the notation of Theorem \ref{mainTheorem}. 
In particular, $\eta=\imf{\eta}{\eps}$ is a uniform  point on the set\begin{esn}
\set{t:X_t-\underline X_t<\eps}.
 \end{esn}
As in the proof of Theorem \ref{DIMVTheorem}, the uniqueness of the minimum implies that $\eta\to \rho$ as $\eps\to 0$. 
Since $X$ is continuous at $\rho$, by assumption, for any $\gamma>0$ we can find $\delta>0$ such that $\abs{X_\rho-X_s}<\gamma$ if $s\in [\rho-\delta,\rho+\delta]$. 
On $[0,\rho-\delta]$ and $[\rho+\delta,1]$, we use the \cadlag\ character of $X$ to construct partitions $0=t^1_0<\cdots<t^1_{n_1}=\rho-\delta$ and $\rho+\delta=t^2_0<\cdots<t^2_{n_2}=1$ such that\begin{esn}
\abs{X_s-X_t}<\gamma \quad\text{if}\quad s,t\in [t^i_{j-1},t^i_j)\text{ for }j\leq n_i. 
\end{esn}We use these partitions to construct the piecewise linear increasing homeomorphism $\fun{\lambda}{[0,1]}{[0,1]}$ which satisfies $\|\theta_\eta\circ \lambda-\theta_\rho\|_{[0,1]}\leq \gamma$. 
Indeed, construct $\lambda$ which scales the interval $[0,t^2_1-\eta]$ to $[0,t^2_1-\rho]$, shifts every interval $[t^2_{i-1}-\eta,t^2_{i}-\eta]$ to $[t^2_{i-1}-\rho,t^2_{i}-\rho]$ for $i\leq n_2$, also shifts $[1-\eta+t^1_{i-1},1-\eta+t^1_{i}]$ to $[1-\rho+t^1_{i-1},1-\rho+t^1_{i}]$, and finally scales $[1-\eta+(\rho-\delta),1]$ to $[1-\delta,1]$. 
Note that by choosing $\eta$ close enough to $\rho$, which amounts to choosing $\eps$ small enough, we can make $\| \lambda-\id\|_{[0,1]}\leq \gamma$. 
Hence, $\theta_\eta\to \theta_\rho$ in the Skorohod $J_1$ topology as $\eps\to 0$. 
\end{proof}

Our main example of the applicability of Corollary \ref{VervaatCorollary} is to exchangeable increment processes. 
\begin{definition}
A \cadlag\ stochastic process has \defin{exchangeable increments (EI)} if its law satisfies that for every $n\geq 1$, the random variables \begin{esn}
X_{k/n}-X_{(k-1)/n}, 1\leq k\leq n
\end{esn}are exchangeable. 
\end{definition}
According to \cite{MR0394842}, an EI process has the following canonical representation:
\begin{esn}
X_t=\alpha t+\sigma b_t+\sum_{i} \beta_i \bra{\indi{U_i\leq t}-t}
\end{esn}where\begin{enumerate}
\item $\alpha$, $\sigma$ and $\beta_i, i\geq 1$ are (possibly dependent) random variables such that $\sum_i \beta_i^2<\infty$ almost surely.
\item $b$ is a Brownian bridge
\item $\paren{U_i,i\geq 1}$ are iid uniform random variables on $(0,1)$.
\end{enumerate}Furthermore, the three groups of random variables are independent and the sum defining $X_t$ converges uniformly in $L_2$ in the sense that\begin{esn}
\lim_{m\to\infty}\sup_{n\geq m}\esp{\sup_{t\in [0,1]}\bra{\sum_{i=m+1}^n \beta_i^2\bra{\indi{U_i\leq t}-t}^2}}=0. 
\end{esn}The above representation is called the canonical representation of $X$ and the triple $\paren{\alpha,\beta,\sigma}$ are its canonical parameters. 

Our main example follows from the following result:
\begin{proposition}
\label{EIExampleProposition}
 Let $X$ be an EI process with canonical parameters $\paren{\alpha,\beta,\sigma}$. 
On the set
\begin{esn}
\set{\sum_i \beta_i^2 \abs{\log\abs{\beta_i}}^c<\infty\text{ for some $c>1$} \text{ or } \sigma\neq 0},
\end{esn}$X$ reaches its minimum continuously at a unique $\rho\in (0,1)$. 
\end{proposition}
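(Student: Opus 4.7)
The plan is to split on the disjunction in the hypothesis and handle the two cases separately. In both cases I would work with the canonical representation $X_t = \sigma b_t + \sum_i \beta_i(\indi{U_i \leq t} - t)$ (taking $\alpha = 0$, since the setting of Corollary \ref{VervaatCorollary} forces $X_0 = X_1 = 0$), and condition on the $\sigma$-algebra $\mathscr{G}$ generated by $\sigma$, $(\beta_i)$, and $(U_i)$.

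In the case $\sigma \neq 0$, under this conditioning $X = \sigma b + g$, where $g$ is a deterministic \cadlag\ function with $g(0) = g(1) = 0$ and $b$ is an independent Brownian bridge. Uniqueness of $\rho$ then reduces to the Gaussian argument used in the proof of Theorem \ref{DIMVTheorem}: for rationals $0 \leq s < t \leq 1$, conditionally on $(b_s, b_t)$ the restricted minima $\min_{[0,s]}X$ and $\min_{[t,1]}X$ are independent, each with an atomless distribution, so $\proba{\min_{[0,s]}X=\min_{[t,1]}X}=0$ and a countable union rules out two distinct minimizers. Continuity of $X$ at $\rho$ follows because the conditional law of $\rho$ given $\mathscr{G}$ is absolutely continuous with respect to Lebesgue measure (by a Cameron-Martin shift near any fixed time), whereas the set of discontinuities $\set{U_i:i\geq 1}$ is countable; and $\rho \in (0,1)$ because $\inf_{(0,1)} \sigma b$ is a.s.\ strictly negative while $X$ is continuous at both endpoints.

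In the case $\sigma = 0$ with $\sum_i \beta_i^2 |\log|\beta_i||^c < \infty$, there is no Gaussian smoothing, so the argument must instead exploit the randomness of the $U_i$'s. For each fixed $j$, I would isolate the $j$-th jump, writing $X = X^{-j} + \beta_j(\indi{U_j\leq \cdot}-\cdot)$ and conditioning on everything but $U_j$, which remains uniform on $(0,1)$. Letting $\rho(u)$ denote the minimum location when $U_j$ is set to $u$, the key step is to show that $\set{u\in(0,1):\rho(u) = u}$ has Lebesgue measure zero, so that $\proba{\rho = U_j} = 0$; summing over $j$ yields $X_{\rho-} = X_\rho$ almost surely. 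Uniqueness of $\rho$ is handled by an analogous perturbation of a single $U_j$ designed to destroy any putative coincidence $X_{\rho_1}=X_{\rho_2}$. The logarithmic moment condition enters by giving uniform control of the tail $\sum_{i > N}\beta_i(\indi{U_i\leq \cdot}-\cdot)$ in a modulus-of-continuity sense, which is what makes these perturbation arguments work uniformly in $j$.

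The main obstacle is this pure-jump case: without Brownian smoothing, verifying Lebesgue-negligibility of $\set{u:\rho(u)=u}$ requires a fine understanding of how the infimum location of $X^{-j}+\beta_j(\indi{u\leq\cdot}-\cdot)$ depends on $u$, and the exponent $c>1$ in $\sum_i \beta_i^2|\log|\beta_i||^c<\infty$ is essentially sharp at the level of the relevant modulus estimates for compensated series of centered random indicators.
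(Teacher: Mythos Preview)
Your approach diverges substantially from the paper's, and in the $\sigma=0$ case there is a real gap.

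The paper does not split on the disjunction. It first establishes a Rogozin-type lemma (Proposition~\ref{rogozinTypeProposition}): on the set in the hypothesis one has $\limsup_{t\to 0} X_t/t=\infty$ and $\liminf_{t\to 0} X_t/t=-\infty$. This is where the logarithmic moment condition is actually used, and not in the way you guess. It is precisely the hypothesis of a theorem of Kallenberg that, when $\sigma=0$, decomposes $X$ as $Y+Z$ with $Y$ a L\'evy process and $Z$ a finite-variation EI process; classical Rogozin then gives the oscillation for $Y$, while $Z_t/t$ has a finite limit (Proposition~\ref{variationProposition}), so $X$ inherits the oscillation. The condition is not used for modulus-of-continuity control of the tail series $\sum_{i>N}\beta_i\bra{\indi{U_i\le\cdot}-\cdot}$ as you suggest; your perturbation sketch therefore never acquires a mechanism, and the ``essentially sharp'' remark about $c>1$ is speculation pointing in the wrong direction.

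Once the oscillation at $0$ is in hand, exchangeability transfers it to every deterministic $t$, and isolating the summand $\beta_i\bra{\indi{U_i\le\cdot}-\cdot}$ (independent of the rest) transfers it to every jump time $U_i$. Any point at which $\liminf_{h\to 0}(X_{\cdot+h}-X_\cdot)/h=-\infty$ cannot be a minimizer, so no $U_i$ is, and $X_{\rho-}=X_\rho$; the oscillation at $0$ and time-reversal give $\rho\in(0,1)$. Your $\sigma\neq 0$ uniqueness argument via Gaussian conditioning is fine, but the claim that the conditional law of $\rho$ given $\mathscr G$ is absolutely continuous is neither proved nor clearly true for arbitrary \cadlag\ drift $g$; the correct reason $\rho\neq U_i$ is again the local oscillation of $b$ (which swamps any linear compensation and any fixed jump), not a Cameron--Martin shift.
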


We need some preliminaries to prove Proposition \ref{EIExampleProposition}. 
First, a criterion to decide whether $X$ has infinite or finite variation in the case there is no Brownian component.
\begin{proposition}
\label{variationProposition}
Let $X$ be an EI process with canonical parameters $\paren{\alpha,\beta,0}$. Then, the sets
\begin{esn}
\set{X\text{ has infinite variation on any subinterval of } [0,1]}
\end{esn}and\begin{esn}
\set{\sum_i\abs{\beta_i}=\infty}
\end{esn}coincide almost surely. 
If $\sum_i\abs{\beta_i}<\infty$ then $X_t/t$ has a limit as $t\to 0$. 
\end{proposition}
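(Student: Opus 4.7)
My plan rests on the canonical representation of $X$, which for $\sigma = 0$ reads $X_t = \alpha t + \sum_i \beta_i (\mathbf{1}_{U_i \leq t} - t)$ with $U_i$ iid uniform on $(0,1)$. The equivalence of the two sets turns on absolute convergence of the pure-jump series. Suppose $\sum_i |\beta_i| < \infty$; then $\sum_i \beta_i \mathbf{1}_{U_i \leq t}$ converges absolutely and uniformly in $t$, so I may regroup to obtain
\[
X_t = \Bigl(\alpha - \sum_i \beta_i\Bigr) t + \sum_i \beta_i \mathbf{1}_{U_i \leq t}.
\]
The pure-jump \cadlag\ process on the right has total variation bounded by the sum of absolute jump sizes $\sum_i |\beta_i|$, and the drift contributes a finite constant, so $X$ has finite variation on $[0,1]$ and hence on every subinterval.

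Conversely, assume $\sum_i |\beta_i| = \infty$ and fix any subinterval $(a,b) \subset [0,1]$. The total variation of $X$ on $(a,b)$ dominates the sum of its absolute jump sizes there, namely $V_{a,b} := \sum_i |\beta_i|\, \mathbf{1}_{U_i \in (a,b)}$. Conditionally on $(\beta_i)$, the summands are independent non-negative random variables with conditional mean $(b-a)|\beta_i|$; since $\sum_i \beta_i^2 < \infty$ forces $\beta_i \to 0$, they are eventually bounded by $1$. Kolmogorov's three-series theorem therefore yields $V_{a,b} = +\infty$ almost surely on the event $\{\sum_i |\beta_i| = \infty\}$, and integrating out the conditioning completes the proof of the equivalence.

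For the last assertion, set $L := \sum_i \beta_i$ (absolutely convergent under the hypothesis) and write $X_t/t = (\alpha - L) + J_t/t$ with $J_t := \sum_i \beta_i \mathbf{1}_{U_i \leq t}$. I would identify the limit as $\alpha - \sum_i \beta_i$ by computing the characteristic function
\[
\mathbb{E}\bigl[e^{i\theta J_t/t}\bigr] = \prod_i \bigl[1 + t(e^{i\theta \beta_i/t} - 1)\bigr],
\]
and showing it tends to $1$ as $t \to 0^+$. The required ingredient is the tail estimate $t \cdot \#\{i : |\beta_i| > t\} \to 0$, which follows from $\sum_i |\beta_i| < \infty$ via the standard fact that $n a_n \to 0$ for a decreasing summable sequence $(a_n)$. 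This gives $J_t/t \to 0$, and hence $X_t/t \to \alpha - \sum_i \beta_i$, in probability. The main technical hurdle I anticipate is upgrading to an almost-sure limit, for which I would invoke a Blumenthal-type $0$-$1$ law at time $0$ for the germ $\sigma$-field of $X$, in analogy with the one available for L\'evy processes.
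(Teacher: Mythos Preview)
Your treatment of the equivalence between the two sets is correct and essentially parallel to the paper's: you both show bounded variation when $\sum_i|\beta_i|<\infty$ by decomposing $X$ explicitly, and in the converse direction you both argue that the sum of absolute jump sizes over a subinterval is a.s.\ infinite. The paper does the latter by a symmetry-plus-Kolmogorov $0$--$1$ argument on the events $A_{k,n}=\{\sum_i|\beta_i|\mathbf 1_{k/n\le U_i\le (k+1)/n}=\infty\}$; your three-series argument is a legitimate alternative that reaches the same conclusion.

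The gap is in the last assertion. Your characteristic-function computation does give $J_t/t\to 0$ in probability, but the upgrade to almost-sure convergence is not justified. You invoke a ``Blumenthal-type $0$--$1$ law'' for the germ $\sigma$-field of an EI process by analogy with L\'evy processes; this is not a standard result, and you neither prove it nor cite it. More importantly, even granting that $\liminf_{t\to 0}J_t/t$ and $\limsup_{t\to 0}J_t/t$ are almost-surely constant, convergence in probability alone does \emph{not} force those constants to coincide: the times at which $J_t/t$ is far from $0$ can be $\omega$-dependent, so $\proba{|J_t/t|>\eps}\to 0$ is compatible with $\limsup J_t/t=b>0$ a.s. You would need an additional argument to rule this out, and none is indicated.

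The paper avoids this issue entirely by a different and much shorter route. Using the EI property one checks that $\espc{X_s}{\sigma(X_u:u\ge t)}=(s/t)X_t$ for $s\le t$, so that $M_r=X_{1-r}/(1-r)$ is a martingale on $[0,1)$. When $\sum_i|\beta_i|<\infty$ one has the elementary bound $\esp{|X_s|}\le s\bigl(|\alpha|+2\sum_i|\beta_i|\bigr)$, hence $\sup_r\esp{|M_r|}<\infty$, and Doob's martingale convergence theorem gives the almost-sure existence of $\lim_{t\to 0}X_t/t$ directly. Your in-probability computation then serves, if desired, to identify the limit as $\alpha-\sum_i\beta_i$, but it cannot replace the martingale step.
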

It is known that for finite-variation L\'evy processes, $X_t/t$ converges to the drift of $X$ as $t\to 0$ as shown in \cite{MR0183022}. 
\begin{proof}
We work conditionally on $\paren{\beta_i}$; assume then that this sequence is deterministic. 
If $\sum_{i}\abs{\beta_i}<\infty$, we can define the following two increasing processes
\begin{esn}
X^p_t=\alpha^+ t+\sum_{i:\beta_i>0} \beta_i\indi{U_i\leq t}
\quad\text{and}\quad
X^n_t=\alpha^{-} t+ \sum_{i:\beta_i<0} -\beta_i\indi{U_i\leq t}
\end{esn}and note that $X=X^p-X^n$. Hence $X$ has bounded variation on $[0,1]$ almost surely. 

On the other hand, if $\sum_{i}\abs{\beta_i}=\infty$ we first assert that the set\begin{esn}
A_{k,n}=\set{\sum_i \abs{\beta_i}\indi{k/n\leq U_i\leq (k+1)/n}=\infty}
\end{esn}has probability $1$  for any $n\geq 1$ and any $k\in\set{0,\ldots, n}$. 
Note that for fixed $n$, $\cup_{0\leq k\leq n-1} A_{k,n}=\oo$. 
Also, $\proba{A_{k_1,n}}=\proba{A_{k_2,n}}$ since the $U_i$ are uniform. 
Finally, note that $A_{k,n}$ belongs to the tail \sa\ of the sequence of random variables $\paren{U_i}$. 
Hence, $\proba{A_{k_1,n}}=1$ by the Kolmogorov 0-1 law. 
Since\begin{esn}
\sum_{i}\abs{\beta_i }\indi{a\leq U_i\leq b}=\sum_{t: \Delta X_t\neq 0}\abs{\Delta X_t}\indi{a\leq t\leq b}
\end{esn}and the sum of jumps of a \cadlag\ function is a lower bound for the variation, we see that $X$ has infinite variation on any subinterval of $[0,1]$. 

Recall that $X_t\in L_2$ (since we assumed that the canonical parameters are constant). 
Using the EI property, it is easy to see that\begin{esn}
\espc{X_s}{X_t,t\geq s}=\frac{s}{t}  X_t.
\end{esn} 
Hence the process $M=\paren{M_t,t\in [0,1)}$ given by $M_t=X_{1-t}/(1-t)$ is a martingale. 
If  $\sum_{i}\abs{\beta_i}<\infty$ 
then\begin{esn}
X_t=\alpha t+\sum_i \beta_i\indi{U_i\leq t}-t\sum_{i}\beta_i
\end{esn}so that $\esp{\abs{M_t}}\leq \abs{\alpha} + 2\sum_i \abs{\beta_i}$. 
Hence, $M$ is bounded in $L_1$ as $t\to 1$ and so it converges almost surely. 
\end{proof}

Secondly, we give a version of a result originally found in  \cite{MR0242261} for L\'evy processes. 
\begin{proposition}
\label{rogozinTypeProposition}
The set\begin{esn}
\set{\sum_i\abs{\beta_i}=\infty, \sum_i \beta_i^2 \abs{\log \abs{\beta_i}}^c<\infty \text{ for some }c>1 \text{ or }\sigma\neq 0} 
\end{esn}is almost surely contained in\begin{esn}
\set{\limsup_{t\to\infty} \frac{X_t}{t}=\infty\text{ and }\liminf_{t\to\infty} \frac{X_t}{t}=-\infty}. 
\end{esn}
\end{proposition}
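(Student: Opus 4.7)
The statement as displayed refers to $t\to\infty$, but for an EI process on $[0,1]$ the only sensible reading is the small-time $t\downarrow 0$, which is moreover precisely what is needed in the application to Proposition \ref{EIExampleProposition}. (If one insists on the $t\to\infty$ reading via an associated L\'evy process $L$, the hypothesis $\sum_i\beta_i^2<\infty$ together with there being only finitely many $|\beta_i|\geq 1$ gives $\esp{|L_1|}<\infty$, and the strong law forces $L_t/t\to\esp{L_1}$ a.s., incompatible with the conclusion. So addressing the $t\to\infty$ version requires the same underlying machinery pushed to the other endpoint of time.)

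My plan is to work conditionally on the canonical parameters $(\alpha,\sigma,(\beta_i))$, treated as deterministic, and reduce to Rogozin's classical theorem \cite{MR0242261}. I would introduce the L\'evy process $L$ on $[0,\infty)$ with Gaussian coefficient $\sigma$ and L\'evy measure $\nu:=\sum_i\delta_{\beta_i}$; this is a bona fide L\'evy measure because $\sum_i\beta_i^2<\infty$. At the relevant endpoint of time, $X$ and $L$ agree up to a deterministic linear compensator $t\sum_i\beta_i$ (which, when $\sum_i|\beta_i|=\infty$, is only formal and must be read as the canonical compensation); the ratio $X_t/t-L_t/t$ vanishes to leading order, so $X_t/t$ and $L_t/t$ share the same $\limsup$ and $\liminf$.

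Next I would apply Rogozin's criterion to $L$. His integrability condition on the tails $\bar\nu(x):=\nu(\{|y|>x\})$ rules out the existence of any almost sure limit of $L_t/t$; by the standard L\'evy $0$--$1$ dichotomy (only an a.s.\ limit in $[-\infty,+\infty]$ or full oscillation is possible), this forces $\limsup L_t/t=+\infty$ and $\liminf L_t/t=-\infty$ a.s. A dyadic decomposition of the Rogozin tail integral followed by an integration by parts is what identifies his analytic condition with the discrete series $\sum_i\beta_i^2|\log|\beta_i||^c<\infty$ for $c>1$. In the alternative branch $\sigma\neq 0$, the Brownian LIL applied to $\sigma b_t$ directly supplies the oscillation, with the jump part contributing only a lower-order perturbation by the $L^2$ bound. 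A Kolmogorov $0$--$1$ argument on the tail $\sigma$-field of the uniforms $(U_i)$, exactly as in the proof of Proposition \ref{variationProposition}, promotes the conditional almost sure conclusion to the unconditional inclusion asserted.

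The principal obstacle is the quantitative bookkeeping matching Rogozin's analytic hypothesis on $\nu$ to the series $\sum_i\beta_i^2|\log|\beta_i||^c<\infty$ with precisely the exponent $c>1$: the dyadic shells must be weighted so that the logarithm appears with the correct power. The Brownian branch is comparatively short, reducing to the classical LIL together with a simple symmetrization to extract both the lim sup and the lim inf. The formal-compensation step, when $\sum_i|\beta_i|=\infty$, also requires care to ensure the deterministic drift does not cancel the L\'evy oscillations after dividing by $t$.
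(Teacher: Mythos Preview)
Your reading of $t\to 0$ is correct, and your treatment of the $\sigma\neq 0$ branch is essentially the paper's: the law of the iterated logarithm for the bridge $b$ gives oscillation of $b_t/\imf{f}{t}$ for any $f$ with $\sqrt t=\imf{o}{\imf{f}{t}}$ and $\imf{f}{t}=\imf{o}{\sqrt{t\log\log 1/t}}$, while $\esp{Y_t^2}\sim t\sum_i\beta_i^2$ kills the jump part along a subsequence. That part is fine.

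The $\sigma=0$ branch, however, has a genuine gap. You propose to manufacture a L\'evy process $L$ with L\'evy measure $\nu=\sum_i\delta_{\beta_i}$ and assert that $X_t/t-L_t/t\to 0$ ``up to a deterministic linear compensator''. But $X$ has its jumps at i.i.d.\ uniform times $U_i$, one jump of each size, whereas $L$ has Poisson jump times and may repeat or omit sizes; there is no evident coupling making the difference $o(t)$, and the ``formal compensator'' $t\sum_i\beta_i$ is divergent precisely in the regime of interest. This step is the whole difficulty, not a bookkeeping detail.

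More importantly, you have misplaced the role of the hypothesis $\sum_i\beta_i^2\abs{\log\abs{\beta_i}}^c<\infty$. It is \emph{not} Rogozin's analytic condition. Rogozin's theorem \cite{MR0242261} says that any L\'evy process of infinite variation automatically satisfies $\limsup_{t\to 0}Y_t/t=\infty$ and $\liminf_{t\to 0}Y_t/t=-\infty$; no tail integral needs to be checked. The logarithmic series condition is instead the hypothesis of a coupling theorem of Kallenberg (Theorem 1.1 of \cite{MR0402901}), which produces a decomposition $X=Y+Z$ with $Y$ a genuine L\'evy process and $Z$ an EI process of \emph{finite} variation. Once you have this, Rogozin applies to $Y$ outright (it inherits infinite variation from $X$), and Proposition \ref{variationProposition} shows $Z_t/t$ converges to a finite limit, so it cannot spoil the oscillation. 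Your dyadic-shell computation is aimed at the wrong target; replace it by invoking the Kallenberg decomposition, and the argument closes.
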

\begin{proof}
By conditioning on the canonical parameters, we will assume they are constant. 

If $\sigma\neq 0$, let $\fun{f}{[0,1]}{\re}$ be a function such that $\sqrt{t}=\imf{o}{\imf{f}{t}}$ and 
$\imf{f}{t}=\imf{o}{\paren{t\log\log1/t}^{1/2}}$ 
as $t\to 0$. 
Then, since the law of the Brownian bridge is equivalent to the law of $B$ on any interval $[0,t]$ for $t<1$, the law of the iterated logarithm implies that  
$\limsup_{t\to 0}b_t/\imf{f}{t}=\infty$ and $\liminf_{t\to 0}b_t/\imf{f}{t}=-\infty$. 
On the other hand, if $Y=X-\sigma b$, then $Y$ is an EI process with canonical parameters $\paren{\alpha,\beta,0}$ independent of $b$. 
Note that $\esp{Y_t^2}\sim t\sum_{i}\beta_i^2$ as $t\to 0$ to see that $Y_t/\imf{f}{t}\to 0$ in $L_2$ as $t\to 0$. If $t_n$ is a (random and $b$-measurable) sequence decreasing to zero such that $b_{t_n}/\imf{f}{t_n}$ goes to $\infty$, we can use the independence of $Y$ and $b$ to construct a subsequence $s_n$ converging to zero such that $b_{s_n}/\imf{f}{s_n}\to \infty$ and $Y_{s_n}/\imf{f}{s_n}\to 0$. We conclude that $X_{t_n}/t_n\to\infty$ and so $\limsup_{t\to 0} X_t/t=\infty$. 
The same argument applies for the lower limit. 

Let us now assume that $\sigma=0$. 
If $\sum_i\abs{\beta_i}=\infty$ then $X$ has infinite variation on any subinterval of $[0,1]$. 
If furthermore  $\sum_i \beta_i^2 \abs{\log \abs{\beta_i}}^c<\infty$ then Theorem 1.1 of \cite{MR0402901} allows us to write $X$ as $Y+Z$ where $Z$ is of finite variation process with exchangeable increments and $Y$ is a L\'evy process. 
Since $Y$ has infinite variation, then $\liminf_{t\to 0} Y_t/t=-\infty$ and $\limsup_{t\to 0}Y_t/t =\infty$ thanks to \cite{MR0242261}. 
Finally, since $\lim_{t\to 0} Z_t/t$ exists in $\re$ by Proposition \ref{variationProposition} since $Z$ is a finite variation EI process, then $\liminf_{t\to 0}X_t/t=-\infty$ and $\limsup_{t\to 0}X_t/t=\infty$. 
\end{proof}
\begin{proof}[Proof of Proposition \ref{EIExampleProposition}]
Since $\liminf_{t\to 0}X_t/t=-\infty$, we see that $\rho>0$. Using the exchangeability of the increments, we conclude from Proposition \ref{rogozinTypeProposition} that at any deterministic $t\geq 0$ we have that $\limsup_{h\to 0} (X_{t+h}-X_t)/h=\infty$ and $\liminf_{h\to 0} (X_{t+h}-X_t)/h=-\infty$ almost surely. 
If we write $X_t=\beta_i\bra{\indi{U_i\leq t}-t}+X'_t$ and use the independence between $U_i$ and $X'$, we conclude that at any jump time $U_i$ we have:
$\limsup_{h\to 0} (X_{U_i+h}-X_{U_i})/h=\infty$ and $\liminf_{h\to 0} (X_{U_i+h}-X_t)/h=-\infty$ almost surely. 
We conclude from this that $X$ cannot jump into its minimum. 
By applying the preceeding argument to $\paren{X_1-X_{\paren{1-t}-},t\in [0,1]}$, which is also EI with the same canonical parameters, we see that $\rho<1$ and that $X$ cannot jump into its minimum either. 
\end{proof}

In contrast to the case of EI processes where we have only stated sufficient conditions for the achievement of the minimum, necessary and sufficient conditions are known for L\'evy processes. 
Indeed, Theorem 3.1 of \cite{MR0433606} tells us that if $X$ is a L\'evy process such that neither $X$ nor $-X$ is a subordinator, 
then $X$ achieves its minimum continuously if and only if $0$ is regular for $(0,\infty)$ and $(-\infty,0)$. 
This happens always when $X$ has infinite variation. 
In the finite-variation case, 
regularity of $0$ for $(0,\infty)$ can be established through Rogozin's criterion: $0$ is regular for $(-\infty,0)$ if and only if $\int_{0+} \proba{X_t<0}/t\, dt=\infty$. 
A criterion in terms of the characteristic triple of the L\'evy process is available in \cite{MR1465812}. 
We will therefore assume\begin{description}
\item[H1] $0$ is regular for $(-\infty,0)$ and $(0,\infty)$.
\end{description}

We now proceed then to give a statement of a Vervaat type transformation for L\'evy processes, although actually we will use their bridges in order to force them to end at zero. 
L\'evy bridges were first constructed in \cite{MR628873} (using the convergence criteria for processes with exchangeable increments of \cite{MR0394842}) and then in \cite{MR2789508} (via Markovian considerations) under the following hypothesis: 
\begin{description}
\item[H2] For any $t > 0$, $\int\abs{\esp{e^{iu X_t}}}\, du<\infty$. 
\end{description}
Under \defin{H2}, the law of $X_t$ is absolutely continuous with a continuous and bounded density $f_t$. 
Hence, $X$ admits transition densities $p_t$ given by $\imf{p_t}{x,y}=\imf{f_t}{y-x}$. 
If we additionally assume \defin{H1} then the transition densities are everywhere positive as shown in \cite{MR0240850}. 
\begin{definition}
The L\'evy bridge from $0$ to $0$ of length $1$ is the \cadlag\ process whose law $\p_{0,0}^1$ 
is determined by the local absolute continuity relationship: for every $A\in\F_s$
\begin{esn}
\imf{\p_{0,0}^1}{A}=\esp{\indi{A}\frac{\imf{p_{1-s}}{X_s,0}}{\imf{p_t}{0,0}}}.
\end{esn}
\end{definition}
See \cite{MR628873}, \cite{MR1278079} or \cite{MR2789508} for an interpretation of the above law as that of $X$ conditioned on $X_t=0$. 
Using time reversibility for L\'evy processes, it is easy to see that
the image of $\p_{0,0}^1$ under the time reversal map $\paren{X_{(1-t)-},t\in [0,1]}$ is the bridge of $-X$ from $0$ to $0$ of length $1$ and that $X_1=X_{1-}=0$ under $\p_{0,0}^1$.  

\begin{proposition}
\label{continuityAtMinimumForLevyBridges}
The law $\p_{0,0}^1$ has the EI property. 
Under $\p_{0,0}^1$, the minimum is achieved at a unique place $\rho\in (0,1)$ and $X$ is continuous at $\rho$. 
\end{proposition}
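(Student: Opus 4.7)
My plan is to establish the three claims in the order: EI property, $\rho\in (0,1)$ together with continuity of $X$ at $\rho$, and finally uniqueness of $\rho$. The last assertion will be the main obstacle.

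For the EI property, on the uniform partition $t_k=k/n$ of $[0,1]$ I would compute the joint density of the increments $\paren{X_{t_k}-X_{t_{k-1}}}_{k=1}^n$ under $\p_{0,0}^1$ by combining the Markov property of the underlying L\'evy process up to time $(n-1)/n$ with the local absolute continuity that defines $\p_{0,0}^1$. The resulting density is proportional to $\prod_{k=1}^n\imf{f_{1/n}}{\xi_k}$ restricted to $\sum_k\xi_k=0$, which is manifestly symmetric in $(\xi_1,\ldots,\xi_n)$. Thus the grid-$n$ increments are exchangeable for every $n$, which is exactly the EI property.

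That $\rho\in(0,1)$ follows from \textbf{H1}: under the unconditioned L\'evy law, $\inf\set{s>0:X_s<0}=0$ almost surely, and the local absolute continuity of $\p_{0,0}^1$ on $\F_s$ for $s<1$ transfers this to the bridge, giving $\underline X<0=X_0$ and hence $\rho>0$. Since the time-reversed process $(X_{(1-t)-})_{t\in[0,1]}$ under $\p_{0,0}^1$ is the bridge of $-X$, which still satisfies \textbf{H1}, the symmetric argument yields $\rho<1$. For continuity at $\rho$, an upward jump $X_{\rho-}<X_\rho$ directly contradicts $X_\rho=\underline X$, while a downward jump $X_{\rho-}>X_\rho$ is ruled out by regularity of $(-\infty,0)$: the trajectory restarted at $X_\rho$ would dip strictly below $X_\rho$ immediately. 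The application of the Markov property at the random time $\rho$ is handled by partitioning on rational cylinders $\set{\rho\in [q,q+\delta]}$ and invoking the ordinary Markov property at the deterministic time $q$.

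Uniqueness of $\rho$ is the principal obstacle. I would argue by contradiction: supposing that with positive probability the minimum is achieved at two distinct times $\rho_1<\rho_2$, set $\rho_1=\inf\set{t:X_t=\underline X}$, which is well defined thanks to the continuity just established. Applying the Markov property of the bridge at $\rho_1$ (again via discretization over $\set{\rho_1\in [q,q+\delta]}$ and a limit argument), the shifted post-minimum trajectory $\paren{X_{\rho_1+t}-X_{\rho_1}}_{t\in [0,1-\rho_1]}$ has the law of a L\'evy bridge from $0$ to $-\underline X$ of length $1-\rho_1$ conditioned to remain non-negative throughout. The existence of $\rho_2$ forces this conditioned bridge to return to $0$ at the strictly positive time $\rho_2-\rho_1$. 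Invoking the standard construction and fluctuation results for L\'evy processes conditioned to stay positive (as used in the references cited for the bridge construction), under \textbf{H1} such a process leaves $0$ immediately and does not return before its terminal time; transferring this to the conditioned bridge via local absolute continuity on sub-intervals $[0,1-\rho_1-\eps]$ and letting $\eps\downarrow 0$ yields the desired contradiction.
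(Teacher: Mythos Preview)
Your argument for the EI property is fine (and in fact the paper does not spell this out, treating it as standard from the cited bridge constructions). Your argument that $\rho\in(0,1)$ via \textbf{H1} and local absolute continuity is also essentially the paper's.

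Where the approaches diverge is in the treatment of continuity at $\rho$ and, especially, uniqueness. The paper handles both at once with a single observation: for any $\delta>0$ the laws $\p_{0,0}^1$ and $\p$ are \emph{equivalent} on $\F_{[\delta,1-\delta]}$, so the almost-sure properties of the minimum of a L\'evy process on a compact interval---uniqueness of the argmin and continuity there, which are known under \textbf{H1} by Millar's theorem---transfer immediately to the bridge on $[\delta,1-\delta]$. Since $\underline X<0$, for $\delta$ small the global minimum lies in $(\delta,1-\delta)$, and letting $\delta\to 0$ finishes the proof. No Markov-type argument at the random time $\rho$ is needed.

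Your route, by contrast, tries to argue directly at $\rho$ (for continuity) and at $\rho_1$ (for uniqueness). The continuity argument is repairable, but the uniqueness argument has a genuine gap. The time $\rho_1=\inf\{t:X_t=\underline X\}$ is not a stopping time---it depends on the global minimum---so the discretization ``$\{\rho_1\in[q,q+\delta]\}$ plus ordinary Markov at $q$'' does not give you what you want: the event $\{\rho_1\in[q,q+\delta]\}$ is not in $\F_q$, and conditioning on it destroys the Markov structure of the post-$q$ path. Identifying the post-$\rho_1$ process as a bridge conditioned to stay nonnegative is essentially Millar's splitting-at-the-minimum theorem, which is a substantial result in its own right and is exactly what one is trying to avoid reproving here. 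The paper's equivalence-of-measures trick sidesteps all of this.
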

We conclude that Corollary \ref{VervaatCorollary} applies under $\p_{0,0}^1$. 
At this level of generality, this has been proved in \cite{MR3160578}. 
In that work, the 
distribution of the image of $\p_{0,0}^1$ under the Vervaat transformation 
was identified with the (Markovian) bridge associated to the L\'evy process conditioned to stay positive which was constructed there. 
\begin{proof}[Proof of Proposition \ref{continuityAtMinimumForLevyBridges}]
Using the local absolute continuity relationship and the regularity hypothesis \defin{H1} we see that $\underline X<0$ under $\p_{0,0}^1$. 
Let $\delta>0$. 
On $[\delta,1-\delta]$, the laws $\p_{0,0}^1$ and $\p$ are equivalent. 
Since the minimum of $X$ on $[\delta,1-\delta]$ is achieved at a unique place and continuously (because of regularity) under $\p$, the same holds under $\p_{0,0}^1$. 
We now let $\delta\to 0$ and use the fact that $\underline X<0$  under $\p_{0,1}^1$ to conclude. 
\end{proof}

\section{Conditioning a brownian bridge on its minimum}
In Section \ref{motivationSection} we considered a limiting case of Theorem \ref{mainTheorem} by conditioning the minimum of the brownian bridge to equal zero rather than to be close to zero when $X_1=0$. 
In this section, we will show that the limiting procedure is also valid when $X_1>0$ and for any value of the minimum. 
This will enable us to establish, in particular,  a pathwise construction of the Brownian meander. 
\begin{theorem}
\label{generalizedVervaatNonZeroEndingPointTheorem}
Let $\p_x$ be the law of the Brownian bridge from $0$ to $x$ of length $1$. 
Consider the reflected process $R=X-J$ where\begin{esn}
J_t=\inf_{s\in [t,1]} X_s\vee \bra{\underline X_t+X_1}.
\end{esn}Then $R$ admits a bicontinuous family of local times $\paren{L^y_t,t\in [0,1],y\geq 0 }$. 
Let $U$ be a uniform random variable independent of $X$ and define\begin{esn}
\nu=\inf\set{t\geq 0: L^y_t=UL^y_1}.
\end{esn}
Let $\p^{y,x}$ be the law of $\imf{\theta_\nu}{ X}$ conditionally on $L^y_1>0$ . 
Then $\p^{y,x}$ is a version of the law of $X$ given $\underline X=y$ under $\p_x$ which is weakly continuous as a function of $y$. 


Conversely, if $x=0$ and $Y$ has law $\p^{y,0}$, $U$ is a uniform random variable and independent of $Y$, and 
 $A=\overline X-\underline X$ is the amplitude of the path $X$, then $\imf{\theta_U}{ Y} $ has the law of $X$ conditionally on $A\geq -y$. 
\end{theorem}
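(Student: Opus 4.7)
The plan is to apply Theorem \ref{mainTheorem} to shrinking intervals $I_\eps=(y-\eps,y]$ containing the target level and pass to the limit $\eps\to 0$, with local times of $R$ playing the role of normalized occupation times.

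First, a direct computation from \eqref{generalShift} expresses $\underline X\circ\theta_t$ as an explicit functional of $X_t$, $\inf_{s\in[t,1]}X_s$, $\underline X_t$ and $X_1$, which up to a sign convention equals $R_t=X_t-J_t$. Consequently the set $\set{t:\underline X\circ\theta_t\in(y-\eps,y]}$ is a slab of the reflected process $R$, and the occupation time $A^{I_\eps}_t$ of Theorem \ref{mainTheorem} is the occupation time of $R$ in that slab. To establish the bicontinuous family $L^y$ of local times of $R$, I would argue that on every compact $[0,1-\delta]$ the canonical process $X$ is a continuous semimartingale under $\p_x$ and that both $\inf_{s\in[\cdot,1]}X_s$ and $\underline X_\cdot+X_1$ have paths of bounded variation, so $J$ and hence $R$ are continuous semimartingales there; Yor's theorem then yields joint continuity of the local times in $(t,y)$. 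A time-reversal argument using that $\p_x$ is obtained from a bridge of $-X+x$ closes up the behaviour at the endpoint $t=1$.

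With these ingredients, Theorem \ref{mainTheorem} gives that $\theta_{\nu_\eps}(X)$, where $\nu_\eps=\inf\set{t:A^{I_\eps}_t=UA^{I_\eps}_1}$, has the law of $\p_x$ conditional on $\underline X\in I_\eps$. The occupation time formula $A^{I_\eps}_t=\int_{y-\eps}^{y}L^z_t\,dz$ together with the bicontinuity of local times gives $A^{I_\eps}_t/\eps\to L^y_t$ uniformly in $t$ almost surely, whence $\nu_\eps\to\nu$ almost surely on $\set{L^y_1>0}$; by the continuity of $u\mapsto\theta_uX$ in the Skorohod topology (as noted in the proof of Theorem \ref{DIMVTheorem}) we conclude $\theta_{\nu_\eps}(X)\to\theta_\nu(X)$. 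On the distributional side, the law of $\underline X$ under $\p_x$ admits a continuous positive density, so the conditional laws given $\underline X\in I_\eps$ converge weakly to a regular conditional distribution given $\underline X=y$. Matching the two limits identifies $\p^{y,x}$ with that conditional law, and the asserted weak continuity in $y$ follows at once from the joint continuity of $(t,y)\mapsto L^y_t$, which renders $\nu$ and hence $\theta_\nu(X)$ continuous in $y$ almost surely.

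The converse mimics the converse in Theorem \ref{mainTheorem}: the uniformity and independence of $\nu$, which persist through the $\eps\to 0$ limit, together with the remark following Theorem \ref{mainTheorem} identifying $\set{A^{I_\eps}_1>0}$ with $\set{A\geq -\inf I_\eps}$ when $X_1=0$, yield that $\theta_U(Y)$ has the law of $X$ under $\p_0$ conditional on $A\geq -y$. The main obstacle I foresee is the clean construction of the bicontinuous local times of $R$: because $J$ mixes past minima, the future infimum of $X$ and the endpoint $X_1$, verifying the semimartingale structure of $R$ and the joint continuity of its local times uniformly up to $t=1$ is the principal new analytic input required beyond what is already present in the proof of Theorem \ref{mainTheorem}.
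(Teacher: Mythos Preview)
Your overall strategy---apply Theorem \ref{mainTheorem} to intervals $I_\eps$ shrinking to $\{y\}$ and pass to the limit via local times of $R$---is exactly what the paper does. Where you part from the paper is in how you propose to build the bicontinuous local times of $R$, and this is where there is a genuine gap.

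You argue that $J$ has paths of bounded variation (true) and hence that $R=X-J$ is a continuous semimartingale on $[0,1-\delta]$, so that Yor's joint continuity theorem applies. The difficulty is that $J_t$ depends on the \emph{future} infimum $\inf_{s\in[t,1]}X_s$, which is not $\F_t$-measurable. Thus $R$ is not adapted to the natural filtration of $X$, and the semimartingale decomposition you invoke is not available there. Enlarging the filtration to make $J$ adapted is not innocuous: in the trivially enlarged (constant) filtration $X$ is no longer a semimartingale, and any intermediate enlargement that keeps $X$ a semimartingale while rendering the future infimum adapted requires nontrivial justification (initial enlargement by $(\rho,\underline X)$, say). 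This is not a technicality one can wave away; it is precisely the obstruction the paper's argument is designed to circumvent.

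The paper instead constructs the local times of $R$ structurally: it uses Denisov's decomposition of the bridge at its minimum, so that conditionally on $(\rho,\underline X)$ the pre- and post-minimum pieces are three-dimensional Bessel bridges, then further splits the post-minimum piece at the last passage below $x$. On each piece $R$ coincides either with $X$ shifted by a constant or with $X-X_{\rightarrow}$ (the future-reflected process), and for the latter Pitman's theorem identifies it with a reflecting Brownian motion in \emph{its own} filtration, where local times are unproblematic. Local absolute continuity of bridge laws with respect to Bessel laws (plus time reversal near the endpoint) transfers bicontinuity. This is the missing idea in your sketch.

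Two smaller points. First, to get $\nu_\eps\to\nu$ from $A^{I_\eps}/\eps\to L^y$ you need that $L^y$ is strictly increasing at $\nu$; the paper secures this by observing that $U$ almost surely avoids the countably many plateau values of $L^y/L^y_1$, an argument you should make explicit. Second, your route to the disintegration via a continuous density of $\underline X$ is fine, but note the paper proceeds in the opposite order: it first identifies the almost sure limit $\theta_\nu X$ and then checks directly that the resulting family $(\p^{y,x})$ disintegrates $\p_x$.
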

The process $R$ is introduced in the preceding theorem for a very simple reason: when $X_1\geq 0$, it is equal to $-\underline X\circ \theta_t$. See Figure \ref{reflectedProcessFigure} for an illustration of its definition. 
\begin{figure}
\includegraphics[width=.85\textwidth]{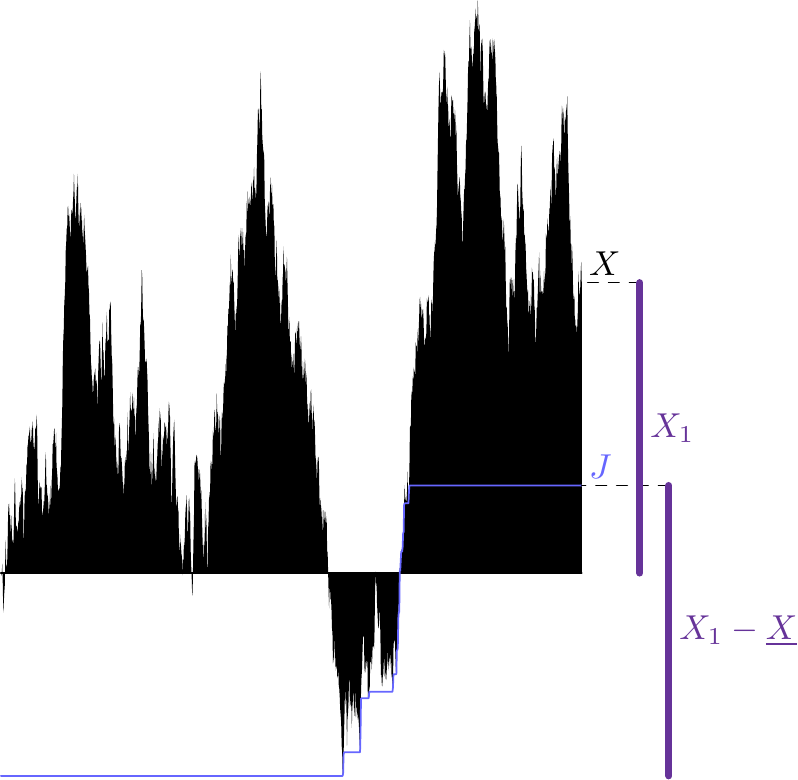}
\includegraphics[width=.85\textwidth]{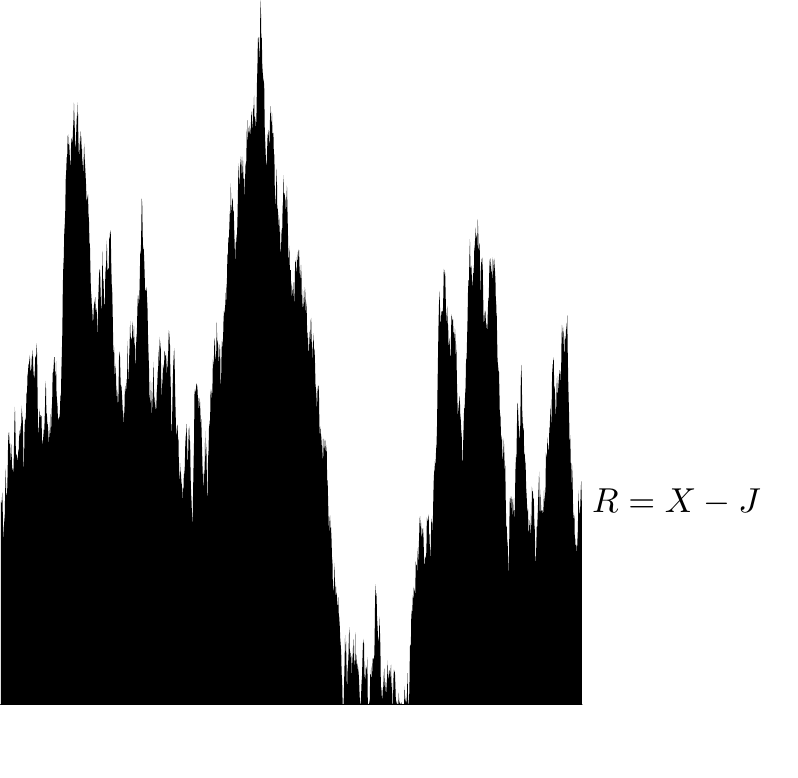}
\caption{Illustration of the reflected process $R$,  given by $-\underline X\circ \theta_t$ when $X_1\geq 0$.}
\label{reflectedProcessFigure}
\end{figure}
\begin{proof}[Proof of Theorem \ref{generalizedVervaatNonZeroEndingPointTheorem}]
To construct the local times, we first divide the trajectory of $X$ in three parts. Let $\rho$ be the unique instant at which the minimum is achieved and let $\underline X$ be the minimum. Using Denisov's decomposition of the Brownian bridge of \cite{denisov:821}, we can see that conditionally on $\rho=t$ and $\underline X=y$, the processes $X^{\leftarrow}=\paren{X_{t-s}-y,s\leq t}$ and $X^{\rightarrow}=\paren{X_{t+s}-y,s\leq 1-t}$ are three-dimensional Bessel bridges starting at $0$, of lengths $t$ and $1-t$, and ending at $y$ and $y+x$ (see also Theorem 3 in \cite{MR3160578}, where the preceding result is stated for $x=0$ for more general L\'evy processes). Next, the trajectory of $X^{\rightarrow}$ will be further decomposed at\begin{esn}
L_x=\sup\set{r\leq 1-t: X^{\rightarrow}_r\leq x}. 
\end{esn}The backward strong Markov property (Theorem 2 in \cite{MR2789508}) tells us that, conditionally on $L_x=s$, the processes $X^{\rightarrow,1}=\paren{X^{\rightarrow}_r,r\leq s}$ is a three-dimensional Bessel bridge from $0$ to $x$ of length $s$. Finally, the process $X^{\rightarrow,2}$ given by $X^{\rightarrow,2}_r=X^{\rightarrow}_{s+r}-x$ for $r\leq 1-t-s$ is a three-dimensional Bessel bridge from $0$ to $y$  of length $1-t-s$. Now, note that under the law of the three-dimensional Bessel process, one can construct a bicontinuous family of local times given as occupation densities. That is, if $\p_0^3$ is the law of the three-dimensional Bessel processes, there exists a bicontinuous process $\paren{L^y_t,t,y\geq 0}$ such that:
\begin{esn}
L_t^y= \lim_{\eps\to 0}\frac{1}{\eps}\int_0^t \indi{\abs{X_s-y}\leq \eps}\, ds
\end{esn}for any $t,y$ almost surely. By Pitman's path transformation between $\p_0^3$ and the reflected Brownian motion found in \cite{MR0375485}, note that if $X_{\rightarrow t}=\inf_{s\geq t}X_s$ is the future infimum process of $X$, then $X-X_{\rightarrow} $ is a reflected Brownian motion for which one can also construct a bicontinuous family of local times. Therefore, the following limits exist and are continuous in $t$ and $y$:
\begin{esn}
L^{r,y}_t=\lim_{\eps\to 0}\int_0^t \indi{\abs{X_r-X_{\rightarrow r}}\leq \eps}\, dr. 
\end{esn}Since the laws of the Bessel bridges are locally absolutely continuous with respect to the law of Bessel processes, we see that the following limits exist and are continuous as functions of $t$ and $z$ under $\p_x$:
\begin{align*}
&\imf{L^{z}_r}{R}
\\&=\lim_{\eps\to 0}\frac{1}{\eps}\int_{[0,r]}\indi{\abs{R_u-z}\leq \eps}\, du
\\&=\lim_{\eps\to 0}\frac{1}{\eps}\bra{\int_{[0,r]} \indi{\abs{X_u-y-z}\leq \eps }\indi{u\in [0,t]\cup[t+s,1]}\, du+\int_{[0,r]} \indi{\abs{X_u-X_{\rightarrow,u}-y-z}\leq \eps }\indi{u\in [t,t+s]}\, du}. 
\end{align*}
(The bridge laws are not absolutely continuous with respect to the original law near the endpoint, but one can then argue by time-reversal.) 

Note that $R$ is cyclically exchangeable, so that the set $L^y_1>0$ is invariant under $\theta_t$ for any $t\in [0,1]$. 
Hence, $X$ conditioned on $L^y_1>0$ is cyclically exchangeable.  
Hence, by conditioning, we can assume that $L^y_1>0$. 

Define $I=(y-\eps,y+\eps)$ and
let\begin{esn}
\eta^I=\inf\set{t\geq 0: A^I_t=U A^I_1}.
\end{esn}Note that the process $L^y$ is strictly increasing at $\nu$. 
Indeed, this happens because $U$ is independent of $L^y$ and therefore is different, almost surely, from any of the values achieved by $L^y/L^y_1$ on any of its denumerable intervals of constancy. 
(The fact that $L^y_1>0$ is used implicitly here.) 
Since $A^I$ converges to $L^y$, it then follows that $\eta^I$ converges to $\nu$. 
Using the fact that $X$ is continuous, it follows that $\theta_{\nu^I}X\to \theta_\nu X$.  
Since  $L^y_1>0$ on, then $\lim_{\eps\to 0}\imf{\p_x}{A^I_1>0}=1$. 
However, by Theorem \ref{mainTheorem}, conditionally on $A^I_1>0$, $\theta_{\eta^I}X$ has the law of $X$ conditioned on $\underline X\in I$. 
Hence, the latter conditional law converges, as $\eps\to 0$ to the law of $\theta_\nu X$. 
A similar argument applies to show that $\nu$ is continuous as a function of $y$ and hence that the law $\p^{y,x}$ is weakly continuous as a function of $y$. 
But now, it is a simple exercise to show that $\paren{\p^{y,x},y\geq 0}$ disintegrates $\p_x$ with respect to $\underline X$. 

Finally, suppose that $x=0$. 
Since $\eta^I$ is independent of $X$, then $\nu$ is independent of $X$ also. 
Hence, the law of $\theta_U$ under $\p^{y,x}$ equals $\p_x$ conditioned on $L_1^y>0$. 
However, note that $L^1_y>0$ implies that $R$ (which equals $X-\underline X$ when $x=0$) reaches level $y$. 
Conversely, if $R$ reaches level $y$, then the local time at $y$ must be positive. 
Hence the sets $\set{L^y_1>0}$ and $\set{A\geq y}$ coincide. 
\end{proof}
One could think of a more general result along the lines of Theorem \ref{generalizedVervaatNonZeroEndingPointTheorem} for L\'evy processes. 
As our proof shows, it would involve technicalities involving local times of discontinuous processes. 
We leave this direction of research open. 

As a corollary (up to a time-reversal), we obtain the path transformation stated as Theorem 7 in \cite{MR2042754}. 
\begin{corollary}
Let $\p$ be the law of a Brownian bridge from $0$ to $0$ of length $1$ and let $U$ be uniform and independent of $X$. 
Let $\nu=\inf\set{t\geq 0: X_t> U \bra{x+\underline X}}$. 
Then $\theta_\nu X$ has the same law as the three-dimensional Bessel bridge from $0$ to $x$ of length $1$. 
\end{corollary}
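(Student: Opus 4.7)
The strategy is to realize the Bessel bridge from $0$ to $x$ of length $1$ as the composition of two shifts of $X$, one produced by Theorem~\ref{generalizedVervaatNonZeroEndingPointTheorem} and one by the Vervaat transformation of Theorem~\ref{DIMVTheorem}, and then to collapse them into the single first-passage shift $\theta_\nu$ of the statement, modulo the time-reversal alluded to in the paragraph preceding the corollary.

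First I would apply Theorem~\ref{generalizedVervaatNonZeroEndingPointTheorem} with bridge endpoint $0$ (so $\p_0=\p$) and level $y=x$: it provides a random time $\nu_L$, namely the $U$-quantile of the local time at level $x$ of the reflected process $R=X-J$, such that conditionally on $L^x_1>0$, equivalently (by the remark following Theorem~\ref{mainTheorem}) on $\{A\geq x\}$ with $A=\overline X-\underline X$, the shifted process $\theta_{\nu_L}X$ has the law $\p^{x,0}$ of the Brownian bridge from $0$ to $0$ conditioned on its minimum being at level $-x$. Next I would apply the Vervaat shift to this conditioned process: by the Denisov decomposition invoked in the proof of Theorem~\ref{generalizedVervaatNonZeroEndingPointTheorem}, shifting $\theta_{\nu_L}X$ at the unique location of its minimum yields the three-dimensional Bessel bridge from $0$ to $x$ of length $1$.

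Since cyclic shifts compose into cyclic shifts on paths with $f(0)=f(1)$, this two-stage procedure is a single shift $\theta_{\nu^*}X$ for some random $\nu^*\in[0,1]$. The remaining step is to identify $\nu^*$, up to the quoted time-reversal, with $\nu=\inf\{t\geq 0:X_t>U[x+\underline X]\}$. The identification rests on writing the $U$-quantile of the local time of $R$ at level $x$ as a first-passage of $X$ above a derived random level, and on observing that the subsequent Vervaat shift at the minimum of the intermediate process is absorbed into this same first-passage because cyclic shifts of a bridge with $X_1=0$ interchange the roles of the minimum and the endpoints.

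The main obstacle is precisely this bookkeeping: one must track (i) the first-passage representation of the inverse local time of the reflected process, (ii) the cyclic composition of two successive shifts, and (iii) the time-reversal that matches the orientation of the Bessel bridge from $0$ to $x$. This is essentially the content of Bertoin's Theorem~7 in \cite{MR2042754}, recast in the path-transformation framework of the present paper.
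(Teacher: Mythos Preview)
Your two–stage approach has a genuine gap at the Vervaat step. You apply Theorem~\ref{generalizedVervaatNonZeroEndingPointTheorem} to the bridge from $0$ to $0$ at level $y=x$ to obtain $\p^{x,0}$, the bridge from $0$ to $0$ conditioned on $\underline X=-x$, and then claim that the Vervaat shift of this process is the three-dimensional Bessel bridge from $0$ to $x$. But cyclic shifts preserve endpoints: for any $u$ and any $f$ with $f(0)=f(1)=0$ one has $\theta_u f(1)=f(1)=0$. Hence the Vervaat transform of a process that starts and ends at $0$ again starts and ends at $0$; it cannot be a Bessel bridge from $0$ to $x$ for $x\neq 0$. (The Denisov decomposition you invoke indeed produces two Bessel bridge pieces from $0$ to $x$, but after the Vervaat shift they are concatenated end-to-end as $0\to x\to 0$, not as a single bridge from $0$ to $x$.) No time-reversal repairs this, since the time-reversal of a Bessel bridge from $0$ to $x$ ends at $0$ but starts at $x$. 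In fact the very same observation shows that the corollary as stated contains a typo: $\p$ must be the law of the Brownian bridge from $0$ to $x$, not from $0$ to $0$, for the conclusion to even be consistent.

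With that correction, the paper's argument is both shorter and avoids any composition bookkeeping. One applies Theorem~\ref{generalizedVervaatNonZeroEndingPointTheorem} directly to the bridge from $0$ to $x$ at level $y=0$: the law $\p^{0,x}$ is that of the bridge from $0$ to $x$ conditioned on $\underline X=0$, which is precisely the three-dimensional Bessel bridge from $0$ to $x$. No second Vervaat step is needed. The only remaining work is to identify the random time $\nu=\inf\{t:L^0_t>UL^0_1\}$ of Theorem~\ref{generalizedVervaatNonZeroEndingPointTheorem} with the first-passage time of the statement. This is where Pitman's theorem enters: under the law of the three-dimensional Bessel process the local time at $0$ of $X-X_{\rightarrow}$ equals the future infimum $X_{\rightarrow}$, and transporting this identity to $R$ expresses $L^0$ of $R$ in terms of $J$ and the running minimum, with terminal value $x+\underline X$. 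The inverse of this explicit increasing process is then the first-passage time $\inf\{t:X_t>U[x+\underline X]\}$. Your proposal never singles out $y=0$ and therefore never arrives at the place where Pitman's identification applies cleanly; the first-passage representation you hope for at level $y=x$ does not exist in this simple form.
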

\begin{proof}
We need only to note that, under the law of the three-dimensional Bessel process, the local time of $X-X_{\rightarrow}$ equals $X_{\rightarrow}$ (which can be thought of as a consequence of Pitman's construction of the three-dimensional Bessel process). Then, the local time at zero of $R$ equals $J+\underline X$, its final value is $x+\underline X$,  and then $\nu=\inf\set{t\geq 0: L^0_{t}>U L^0_1}$. 
\end{proof}
By integrating with respect to $x$ in the preceding corollary, we obtain a path construction of the Brownian meander in terms of Brownian motion. 
Indeed, consider first a Brownian motion $B$ and define $X=B\imf{\sgn}{B_1}$. 
Then $X$ has the law of $B$ conditionally on $B_1>0$ and it is cyclically exchangeable. 
Applying Theorem \ref{generalizedVervaatNonZeroEndingPointTheorem} to $X$, we deduce that if
 $\nu=\inf\set{t\geq 0: X_t> U \bra{X_1+\underline X}}$ then $X\circ\theta_\nu$ has the law of the weak limit as $\eps\to 0$ of $B$ conditioned on $\inf_{t\leq 1} B_t\geq -\eps$, a process which is known as the Brownian meander. 

Setting $x=0$ in Theorem \ref{generalizedVervaatNonZeroEndingPointTheorem} gives us a novel path transformation to condition a Brownian bridge on achieving a minimum equal to $y$. 
In this case, we consider the local time process. 
This generalizes the Vervaat transformation, to which it reduces when $y=0$. 
\begin{corollary}
\label{endingPointEqualZeroMinimumNotZeroCorollary}
Let $\p$ be the law of the Brownian bridge from $0$ to $0$ of length $1$, let $\paren{L^y_t,y\in\re,t\in [0,1]}$ be its continuous family of local times and let $U$ be uniform and independent of $X$. 
For $y\leq 0$, let\begin{esn}
\eta_y=\inf\set{t\geq 0: L^{\underline X-y}_t>UL^{\underline X-y}_1}. 
\end{esn}Then the laws of $X\circ \theta_{\eta_y}$ provide a weakly continuous disintegration of $\p$ given $\underline X=y$. 
\end{corollary}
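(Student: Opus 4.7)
The plan is to invoke Theorem \ref{generalizedVervaatNonZeroEndingPointTheorem} with $x = 0$ and to rewrite its local-time ingredient in terms of $X$ itself rather than the auxiliary reflected process $R$.

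The central observation is that, when $X_0 = X_1 = 0$, the identification $R_t = -\underline X \circ \theta_t$ pictured in Figure \ref{reflectedProcessFigure}, combined with the elementary formula $\underline X \circ \theta_t = \underline X - X_t$ already used in the proof of Theorem \ref{DIMVTheorem}, collapses to
\begin{esn}
R_t = X_t - \underline X, \qquad t \in [0,1].
\end{esn}
Thus $R$ is simply the bridge translated vertically by the random constant $-\underline X$. Its occupation measure is the translate by the same amount of the occupation measure of $X$, so the two bicontinuous families of local times are related by
\begin{esn}
L^z_t(R) = L^{\underline X + z}_t(X), \qquad z \geq 0,\ t \in [0,1].
\end{esn}
Taking $z = -y$ with $y \leq 0$ yields $L^{-y}_t(R) = L^{\underline X - y}_t(X)$, and the event $\{L^{-y}_1(R) > 0\}$ coincides with $\{A \geq -y\}$, where $A = \overline X - \underline X$ is the amplitude.

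Under this identification, the random time $\nu$ of Theorem \ref{generalizedVervaatNonZeroEndingPointTheorem} applied at level $-y$ is exactly $\eta_y$, and the three assertions of the corollary---that $\theta_{\eta_y}(X)$ realizes a regular conditional version of $\p$ given $\underline X = y$, that this conditional law is weakly continuous in $y$, and that these laws disintegrate $\p$ with respect to $\underline X$---follow directly from the $x = 0$ case of that theorem. The one nonroutine step, and the one I would expect to be the main hurdle, is the verification of $R = X - \underline X$ when $X_1 = 0$: a short case analysis around the unique minimum time $\rho$ shows that $J_t$ collapses to $\underline X$ throughout $[0,1]$ (the past minimum realizes $\underline X$ on $[\rho,1]$ while the future infimum realizes it on $[0,\rho]$), after which the remainder of the proof is purely translational.
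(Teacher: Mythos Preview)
Your proposal is correct and follows essentially the same route as the paper: specialize Theorem \ref{generalizedVervaatNonZeroEndingPointTheorem} to $x=0$, observe that then $R=X-\underline X$, and translate the local-time levels accordingly so that $\nu$ becomes $\eta_y$. The paper justifies the local-time identity by invoking bicontinuity and the occupation-time construction, while you add the short argument that $J_t\equiv\underline X$ via a case split at $\rho$; this is a welcome elaboration but not a different method.
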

The only difference with Theorem \ref{generalizedVervaatNonZeroEndingPointTheorem} is that the local times are defined directly in terms of the Brownian bridge since the reflected process $R$ equals $X-\underline X$ when the ending point is zero. 
The equality between both notions follows from bicontinuity and the fact that local times were constructed as limits of occupation times. 
Also, note that since the minimum is achieved in a unique place $\rho\in (0,1)$, then $L^{\underline X}_1=0$. 
Hence $\eta_y\to\rho$ as $y\to 0$ and the preceding path transformation converges to the Vervaat transformation. 

Theorem \ref{generalizedVervaatNonZeroEndingPointTheorem} may be expressed in terms of the non
conditioned process, that is, instead of considering the Bessel
bridge, one may state the above transformation for the three
dimensional Bessel process itself. More precisely, since path
by path
\begin{esn}
\theta_u(X_t-xt\,,0\leq t\leq1)=(\imf{\theta_u f}{X}_t-xt\,,0\leq t\leq1)\,,
\end{esn}
and since the Brownian bridge $b$ from $0$ to $x$  can be represented as 
\begin{equation}\label{rep1}
b_t=X_t-t(X_1-x),\,0\leq t\leq1)\,,
\end{equation}under the law of Brownian motion, 
then the process $(b_t-xt\,,0\leq1)$ is a Brownian bridge from 0 to 0
and then so is $(\theta_u(X)_t-xt\,,0\leq t\leq1)$ under the law of the three dimensional Bessel bridge from $0$ to $x$ of length $1$. 
In particular,
the law of the latter process does not depend on $x$ and we can state:
\begin{corollary}\label{bes3Corollary} 
Under the law of the three-dimensional Bessel process on $[0,1]$, if $U$ is uniform and independent of $X$, 
then $\paren{\imf{\theta_U}{X}_t-tX_1,\,0\leq t\le1}$ is a Brownian
bridge (from $0$ to $0$ of length $1$) which is independent of $X_1$.
\end{corollary}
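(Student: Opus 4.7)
The plan is to condition on $X_1$, reduce to a statement about the Bessel-3 bridge, and then invert the coupling provided by Theorem \ref{generalizedVervaatNonZeroEndingPointTheorem}.

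First, conditioning on $X_1 = x$ turns the three-dimensional Bessel process into the Bessel-3 bridge from $0$ to $x$ of length $1$, which has law $\p^{0,x}$. It thus suffices to prove: for each $x \ge 0$, if $Y \sim \p^{0,x}$ and $U$ is uniform on $[0,1]$ and independent of $Y$, then $(\imf{\theta_U}{Y}_t - xt, 0 \le t \le 1)$ is a Brownian bridge from $0$ to $0$. The advertised independence from $X_1$ is automatic because this conditional law will not depend on $x$.

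Second, the key step is to show $\imf{\theta_U}{Y} \sim \p_x$; since the deterministic map $f \mapsto (\imf{f}{t} - xt, 0 \le t \le 1)$ pushes $\p_x$ onto $\p_0$, the desired conclusion then follows. For this I would invert the coupling of Theorem \ref{generalizedVervaatNonZeroEndingPointTheorem}: on $\set{L^0_1 > 0}$ that theorem realizes $Y$ as $\imf{\theta_\nu}{b}$ for a Brownian bridge $b \sim \p_x$, with $\nu$ uniform on $[0,1]$ and independent of $Y$. Introducing $U$ uniform and independent of $b$ (hence of $\nu$ and of $Y$), the cyclic shifts satisfy the group law $\imf{\theta_U}{\imf{\theta_\nu}{b}} = \imf{\theta_{U+\nu \bmod 1}}{b}$, most easily seen by writing $\imf{\theta_u f}{t} = \imf{\hat f}{t+u} - \imf{\hat f}{u}$ with $\imf{\hat f}{t+n} = \imf{f}{t} + n\imf{f}{1}$ the canonical extension of $f$ to $\re$. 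Since $U + \nu \bmod 1$ is uniform on $[0,1]$ and independent of $b$, cyclic exchangeability of $\p_x$ then gives $\imf{\theta_U}{Y} \sim \p_x$, and subtracting the linear function $xt$ produces the claimed $\p_0$.

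Finally, integration over the distribution of $X_1$ under the Bessel-3 process law yields the statement of the corollary, with independence from $X_1$ following from the fact that the above conditional distribution is free of $x$.

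The only nontrivial ingredient beyond bookkeeping is the verification that $\p_x(L^0_1 > 0) = 1$ for every $x \ge 0$, so that the conditioning inherent in Theorem \ref{generalizedVervaatNonZeroEndingPointTheorem} can be dropped in the second step. I expect this to be the main obstacle; I would establish it by combining the Denisov-type decomposition around the unique minimum time $\rho$ (which was already used in the proof of Theorem \ref{generalizedVervaatNonZeroEndingPointTheorem}) with the fact that a reflected Brownian motion accumulates positive local time at $0$ instantaneously.
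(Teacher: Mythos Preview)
Your proof is correct and follows the same route as the paper: condition on $X_1=x$ to pass to the Bessel-3 bridge, then invert the path transformation of Theorem \ref{generalizedVervaatNonZeroEndingPointTheorem} (and its preceding corollary) to recover a Brownian bridge from $0$ to $x$, and finally subtract $xt$; the paper's paragraph preceding the corollary is exactly this argument, written very tersely. One minor simplification you could make at the inversion step: rather than introduce a fresh uniform on top of the one hidden in $\nu$, use directly that Theorem \ref{generalizedVervaatNonZeroEndingPointTheorem} gives $(\theta_\nu b,\nu)\stackrel{d}{=}(Y,U)$ with $U$ uniform and independent of $Y$, and apply the map $(y,u)\mapsto \theta_{1-u}y$ to both sides to get $b=\theta_{1-\nu}\theta_\nu b\stackrel{d}{=}\theta_{1-U}Y\stackrel{d}{=}\theta_U Y$.
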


Let us end by noting the following consequence of Theorem \ref{generalizedVervaatNonZeroEndingPointTheorem}: if $\p$ is the law of the Brownian bridge from $0$ to $0$, then  the law of $\overline X-y$ given $\underline X=y$ equals the law of $A$ given $A\geq y$. When $y=0$, we conclude that the law of the maximum of a normalized Brownian excursion equals the law of the range of a Brownian bridge. This equality was first proved in \cite{MR0467948} and \cite{MR0402955}. Providing a probabilistic explanation was the original motivation of Vervaat when proposing the path transformation $\theta_\rho$ in \cite{MR515820}. 
\bibliography{GenBib}
\bibliographystyle{amsalpha}
%
\end{document}